\let\citet\textcite
\let\citep\parencite
\def\cite{\@ifnextchar[{\parencite}{\textcite}}
\pgfplotsset{compat=newest}
\tikzset{external/force remake}
\title{Slowly varying, macroscale models emerge from microscale dynamics over multiscale domains}
\author{A.~J. Roberts
\thanks{School of Mathematical Sciences, University of Adelaide, South Australia.  \protect\url{mailto:anthony.roberts@adelaide.edu.au}
\textsc{orcid:}0000-0001-8930-1552 }
\and J.~E. Bunder
\thanks{School of Mathematical Sciences, University of Adelaide, South Australia.  \protect\url{mailto:judith.bunder@adelaide.edu.au}
\textsc{orcid:}0000-0001-5355-2288}
}
\date{\today}
\newcommand{\ydim}{Y}
\newcommand{\pxt}[2][X]{\ifx0#2%
  \else\ifx1#2(x-#1)%
  \else\frac{(x-#1)^{#2}}{#2!}%
  \fi\fi}
\newcommand{\pxvt}[2][\Xv]{\ifx0#2%
  \else\ifx1#2(\xv-#1)%
  \else\frac{(\xv-#1)^{#2}}{#2!}%
  \fi\fi}
\newcommand{\pyt}[2][Y]{\ifx0#2%
  \else\ifx1#2(y-#1)%
  \else\frac{(y-#1)^{#2}}{#2!}%
  \fi\fi}
\newcommand{\xivt}[1]{\ifx0#1 \else\ifx1#1\xiv
  \else\frac{\xiv^{#1}}{#1!}\fi\fi}
\newcommand{\Tr}[1]{#1^\dagger}
\newcommand{\ov}{{\vec 0}}
\let\rvv\rv \def\rv{r}
\newcommand{\tr}{\ensuremath{\tilde r}}
\newcommand{\tu}{\ensuremath{\tilde u}}
\newcommand{\tL}{\ensuremath{\tilde\cL}}
\newcommand{\tK}{\ensuremath{\tilde\cK}}
\newcommand{\tN}{\ensuremath{\mathcal N}}
\newcommand{\tV}{\ensuremath{\tilde\cV}}
\newcommand{\tU}{\ensuremath{\tilde U}}
\newcommand{\tW}{\ensuremath{\tilde\cW}}
\def\ou\big(#1,#2,#3\big){{e^{\if#31\else#3\fi t}\star}#1\,}
\newcommand{\pdf}{\textsc{pdf}}
\newcommand{\Span}{\operatorname{span}}
\newcommand{\bA}{\ensuremath{\mathbf A}}
\newcommand{\des}{\textsc{de}s}
\begin{document}

\maketitle

\begin{abstract}
Many physical systems are well described on domains which are relatively large in some directions but relatively thin in other directions.  
In this scenario we typically expect the system to have emergent structures that vary slowly over the large dimensions.
For practical mathematical modelling of such systems we require efficient and accurate methodologies for reducing the dimension of the original system and extracting the emergent dynamics.
Common mathematical approximations for determining the emergent dynamics often rely on self-consistency arguments or limits as the aspect ratio of the `large' and `thin' dimensions becomes unphysically infinite.  
Here we build on a new approach, previously establish for systems which are large in only one dimension, which analyses  the dynamics at each cross-section of the domain with a rigorous multivariate Taylor series.  
Then centre manifold theory supports the local modelling of the system's emergent dynamics with coupling to neighbouring cross-sections treated as a non-autonomous forcing.  
The union over all cross-sections then provides powerful support for the existence and emergence of a centre manifold model global in the large finite domain.  
Quantitative error estimates are determined from the interactions between the cross-section coupling and both fast and slow dynamics. 
Two examples provide practical details of our methodology.
The approach developed here may be used to quantify the accuracy of known approximations, to extend such approximations to mixed order modelling, and to open previously intractable modelling issues to new tools and insights.
\end{abstract}

\tableofcontents

\section{Introduction}

System of large spatial extent in some directions and relatively thin extent in other dimensions are important in engineering and physics. 
Examples include thin fluid films, flood and tsunami modelling \cite[e.g.]{Noakes06, Bedient88, LeVeque2011}, pattern formation in systems near onset \cite[e.g.]{Newell69, Cross93, Westra2003},  wave interactions \cite[e.g.]{Nayfeh71b, Griffiths05},  elastic shells \cite[e.g.]{Naghdi72, Mielke88c, Lall2003}, and microstructured materials \cite[e.g.]{Romanazzi2016}.
There are many formal approaches to mathematically describe,  by means of modulation or amplitude equations, the relatively long time and space evolution of these systems~\cite[e.g.]{Vandyke87}.
This article develops a general approach to illuminate and enhance such practical approximations.
\cite{Roberts2013a} originally developed this approach for systems which have only one large dimension, and any number
of significantly smaller dimensions.
Here we consider the general case where the system has (finite) number of large dimensions and any number of thin dimensions.

The approach is to examine the dynamics in the locale around any cross-section.
We find that a truncated Taylor series---a Taylor multinomial---for local spatial structures is only coupled to neighbouring locales via the highest order resolved derivative. 
This coupling as is treated as an `uncertain forcing' of the local dynamics, and with Assumption~\ref{ass:spec}, we apply non-autonomous centre manifold theory \cite[e.g.]{Potzsche2006, Haragus2011} to prove the existence and emergence of a slowly varying local model.
The theoretical support provided by centre manifold theory applies for all cross-sections and so establishes existence and emergence of a centre manifold model globally over the spatial domain (Proposition~\ref{thm:svpde}) to form an `infinite' dimensional centre manifold \cite[e.g.]{Gallay93, Aulbach96, Aulbach2000}.
\autoref{sec:pmid} develops the methodology for a general linear \pde\ system defined on a general domain consisting of both `thin' and `large' dimensions.
In addition to rigorous proofs, Section \ref{sec:pmid}  also establishes a practical construction procedure based upon a multinomial generating function.

The new approach derives a novel quantitative estimate of the leading error, equation~\eqref{eq:pderemain}, obtained from the final term of the exact Taylor multinomial~\eqref{eq:Nintterm}.
Thus our approach not only provides new theoretical support for established methods such as the method of multiple scales, it extends the methods and provides new error estimates of the slowly varying model.
Interestingly, the theory is still valid in boundary layers and shocks, it is just that then the error terms are so large that the analysis in terms of centre-stable space variations is inappropriate.
However, we restrict the current analysis to linear \pde\ systems and will consider nonlinear systems in future work.

Two examples illustrate the general method and theory of \autoref{sec:pmid}.
First, \autoref{sec:mdhx} introduces the new approach with the example of a random walker who walks over a large plane but randomly changes direction among three directions.
We construct the emergent mean dynamics of the random walker across the plane, as a Fokker--Planck \pde, with a quantifiable error which agrees with the general form of the error~\eqref{eq:pderemain}.
The computer algebra code of \autoref{sec:camhe} implements the practical construction algorithm for this  example and confirms that the modelling extends to arbitrary order to rigorously derive a generalised Kramers--Moyal expansion for the random walker \cite[e.g.]{Pawula67}.

Second, \autoref{sec:ehmd2d} discusses the more complex example of a two dimensional heterogeneous diffusion problem---one with a spatially varying diffusivity in a cellular pattern. 
Via an ensemble of cellular phase-shifted problems, the heterogeneous diffusion problem is embedded in a family of problems which are homogeneous in the large dimensions and heterogeneous only in the thin dimensions of the  cellular ensemble. 
Then the local dynamics within a cellular cross-section leads to the ensemble averaged homogenisation of the original \pde{}.
This approach should underlie future rigorous modelling of pattern formation problems in multiple space dimensions.

The new methodology developed herein is \(\epsilon\)-free.  
Although the analysis is based upon a fixed reference equilibrium, crucially the subspace and centre manifold theorems guarantee the existence and emergence of models in a finite domain about this reference equilibrium.
Sometimes such a finite domain of applicability is large.
The only epsilons in this article appear in comparisons with other methodologies.

\section{Macroscale dynamics of a random walker}
\label{sec:mdhx}

This section introduces the novel approach in perhaps the simplest example system of the effective drift of a vacillating random walker.  
\autoref{sec:pmid} develops the approach for \emph{general} linear \pde{}s on general large but thin domains.

\begin{figure} 
\centering
\includegraphics{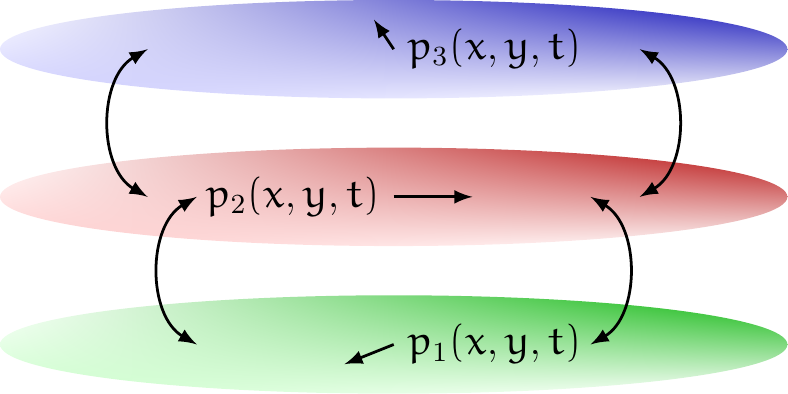}
\caption{Depending upon state of mind, a walker steps in one of three directions, indicated by different colours and indices $j=1,2,3$\,, and directions indicated by the intra-plane arrows. 
However, the walker randomly in time changes state of mind, indicated by the inter-plane double-headed arrows.}
\label{fig:heat2d}
\end{figure}

Consider the example of walker located somewhere in large domain in the \(xy\)-plane.
At any time the walker steps in one of three directions, as illustrated by different colours in \autoref{fig:heat2d}.
But the walker randomly changes directions, as also illustrated in \autoref{fig:heat2d}.
We model the probability that the walker is at position~\((x,y)\) and walking in direction~\(j\) at time~\(t\).

Let's explore the evolution of the probability density function (\pdf) for the random walker.
Let the \pdf\ be denoted by~$p_j(x,y,t)$ for direction states $j=1,2,3$\,.
To avoid some symmetry in the system, the walker cannot move directly between the first and third states, but must go indirectly via the second state.
Choose constant non-dimensional walking velocities in the three states of 
\(\vv_1=(1,1)\), \(\vv_2=(-1,0)\) and \(\vv_3=(1,-1)\).
The principle of conservation of probability gives that the three governing non-dimensional Fokker--Planck \pde{}s are
\begin{subequations}\label{eqs:hedim}%
\begin{eqnarray}
&&\D t{p_1}=-\D x{p_1}-\D y{p_1}+(p_2-p_1),
\label{eq:hedima}\\
&&\D t{p_2}={+\D x{p_1}}\phantom{{}+2\D y{p_2}}+(p_1-2p_2+p_3),
\label{eq:hedimb}\\
&&\D t{p_3}=-\D x{p_3}+\D y{p_3}+(p_2-p_3),
\label{eq:hedimc}
\end{eqnarray}
\end{subequations}
for \((x,y)\) in some large spatial domain~\XX.
For simplicity, in this section we assume the domain~\XX\ is convex---a restriction removed in the general analysis of \autoref{sec:pmid}.

An equivalent interpretation of the system of \pde{}s~\eqref{eqs:hedim} is that for a sort of heat exchanger when we view \(p_j(x,y,t)\)~as the temperature field in three adjacent plates, which is carried by some fluid flow at velocity~\(\vv_j\) in plate~\(j\) and diffuses between plates~\(j\), with only plate $j=2$ in contact with both plates $j=1,3$\,.
Our challenge then is to find a description of the large time, emergent, heat distribution. 
The equivalent challenge for the random walker is to describe his/her emergent probability distribution.

We focus upon the emergent solution of the \pde{}s~\eqref{eqs:hedim} in the interior of the domain~\XX{}.
This section ultimately finds that the mean \pdf, $u_0(x,y,t)=\tfrac13(p_1+p_2+p_3)$, satisfies the anisotropic Fokker--Planck\slash advection-diffusion \pde 
\begin{equation}
\D t{u_0}\approx-\frac13\D x{u_0}+\frac8{27}\DD x{u_0}+\frac23\DD y{u_0} 
\quad\text{for }(x,y)\in\XX\,.
\label{eq:appdiff}
\end{equation}
Many extant mathematical methods, such as homogenisation and multiple scales \cite[e.g.]{Engquist08, Pavliotis07}, would derive such an  \pde.
The main results of this section are to {rigorously} derive this \pde\ from a `local' analysis, 
complete with a novel quantitative error formula,
and with an innovative proof that the \pde\ arises as a naturally \emph{emergent} model from a wide variety of initial and boundary conditions.

The \pde{}s~\eqref{eqs:hedim} would have some boundary conditions specified on the boundary~\(\partial\XX\).
A future challenge is to determine the corresponding boundary conditions on~\(\partial\XX\) to be used with the model \pde~\eqref{eq:appdiff} in order to correctly predict the mean field~$u_0$ \cite[e.g.]{Roberts92c, Chen2016}.

The analysis is clearer in cross-state spectral modes.
Thus transform to fields
\begin{eqnarray}&&
u_0(x,y,t):=\tfrac13(p_1+p_2+p_3)\quad (\text{the mean field}),
\nonumber\\&&
u_1(x,y,t):=\tfrac12(p_1-p_3),\quad 
u_2(x,y,t):=\tfrac16(p_1-2p_2+p_3). \label{eq:udefs}
\end{eqnarray}
Equivalently, \(p_1=u_0+u_1+u_2\)\,, \(p_2=u_0-2u_2\) and \(p_3=u_0-u_1+u_2\)\,.
Then the governing \pde{}s~\eqref{eqs:hedim} become the separated `slow-fast' system
\begin{subequations}\label{eqs:hedimu}%
\begin{eqnarray}&&
\D t{u_0}=\phantom{-3u_2}-\frac13\D x{u_0}-\frac23\D y{u_1}-\frac43\D x{u_2}\,,
\label{eq:hedimua}
\\&&
\D t{u_1}=\,-\ u_1\ -\ \D y{u_0}\ -\ \D x{u_1}\ -\ \D y{u_2}\,,
\label{eq:hedimub}
\\&&
\D t{u_2}=-3u_2-\frac23\D x{u_0}-\frac13\D y{u_1}+\frac13\D x{u_2}\,.
\label{eq:hedimuc}
\end{eqnarray}
\end{subequations}
This form highlights that the difference fields~\(u_1\) and~\(u_2\) tend to decay exponentially quickly, but that interaction between gradients of the mean and difference fields generates other effects, effects that are crucial in deriving the macroscale Fokker--Planck\slash advection-diffusion \pde~\eqref{eq:appdiff}.

\subsection{In the interior}

Our approach expands the fields in their local spatial structure based around any station $(x,y)=(X,Y)$, and then the results apply to all stations.
As commented earlier, this approach is \(\epsilon\)-free.

\begin{figure}
\centering
\includegraphics{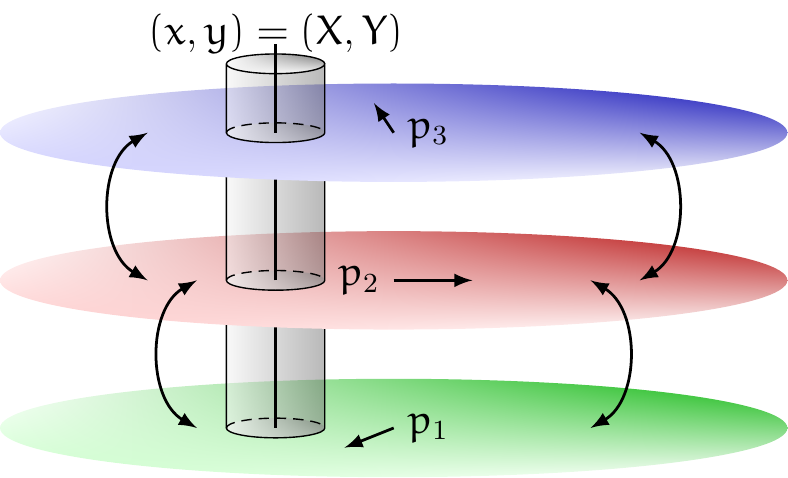}
\caption{schematic diagram of the random walker indicating that we focus on modelling the \pdf\ dynamics in the locale of a fixed station \((x,y)=(X,Y)\).
The vertical cylinder illustrates an example of the locale of the station \((x,y)=(X,Y)\).}
\label{fig:hestation}
\end{figure}  

Fix upon a station in~\XX\ at $(x,y)=(X,Y)$ as shown in \autoref{fig:hestation}.  
Consider the \pdf\ fields in the vicinity of $(x,y)=(X,Y)$, and denote the collective as vector \(u(x,y,t):=(u_0,u_1,u_2)\) (an unsubscripted~\(u\) denotes such a vector in~\(\RR^3\)).  
For all \((x,y)\in\XX\) (convex) invoke a multivariate Taylor's Remainder Theorem to express the fields exactly:
\begin{subequations}\label{eqs:utaylor1}%
\begin{eqnarray}
u&=&u^{00}(X,Y,t)+u^{10}(X,Y,t)\pxt1 
+u^{01}(X,Y,t)\pyt1
\nonumber\\&&{}
+u^{20}(X,Y,x,y,t)\pxt2
+u^{11}(X,Y,x,y,t)\pxt1\pyt1
\nonumber\\&&{}
 +u^{02}(X,Y,x,y,t)\pyt2\,,
\label{eqs:trt}
\end{eqnarray}
where,  assuming $u$~is twice differentiable in~\XX, the coefficient `derivatives' are
\begin{itemize}
\item  when \(m+n<2\)\,, 
\begin{equation}
u^{mn}(X,Y,t):=\left.\frac{\partial^{m+n}u}{\partial x^m\partial y^n}\right|_{(x,y)=(X,Y)}\,,
\end{equation}

\item whereas when \(m+n=2\)\,, 
\begin{equation}
u^{mn}(X,Y,x,y,t):=2\int_0^1 (1-s)
\left.\frac{\partial^{m+n}u}{\partial x^m\partial y^n}\right|_{(X,Y)+s(x-X,y-Y)}\,ds\,.
\label{eq:tayinrem}
\end{equation}
\end{itemize}
\end{subequations}

For definiteness and to avoid a combinatorial explosion of equations, this section truncates the Taylor expansion of~$u$ to the lowest order of interest, namely the quadratic approximation \(N=2\); \autoref{sec:camhe} lists computer algebra code that not only derives the results summarised here, but also derives corresponding results for arbitrarily specified truncation order~\(N\).

\paragraph{Local ODEs}

Substituting the Taylor expansion~\eqref{eqs:trt} into the governing \pde{}s~\eqref{eqs:hedimu} leads to 
a set of equations which are exact everywhere.
But in some places (namely near the station~\((X,Y)\)) the equations are useful in that remainder terms are negligibly small.
We derive a set of linearly independent equations for the coefficient functions~\(u_j^{mn}\) in~\eqref{eqs:utaylor1} simply by differentiation and evaluation at $(x,y)=(X,Y)$ (\autoref{sec:camhe}):
this process is almost the same as equating coefficients of terms in~\((x-X)^m(y-Y)^n\), but with care to maintain exactness one finds exact remainder terms.
To express the remainder terms, and because of the subsequent evaluation at the station \((x,y)=(X,Y)\), the symbols~\(u_j^{mn}\) such that $m+n=2$ denote \(u_j^{mn}(X,Y,X,Y,t)\);
further, the \(x\) and~\(y\) subscripts in the symbols~\(u_{jx}^{mn}\) and~\(u_{jy}^{mn}\) for \(m+n=2\) denote the definite but `uncertain third-order' derivatives 
\footnote{The `uncertain' derivatives~\(u_{jx}^{mn}\) and~\(u_{jy}^{mn}\) might appear to be simple third-order derivatives, but they are more subtle because of the integral~\eqref{eq:tayinrem}.}
\begin{equation}
u_{jx}^{mn}:=\left.\D x{u_{j}^{mn}}\right|_{(x,y)=(X,Y)}
\quad\text{and}\quad
u_{jy}^{mn}:=\left.\D y{u_{j}^{mn}}\right|_{(x,y)=(X,Y)}.
\label{eq:uxyd}
\end{equation}
Now, after substituting~\eqref{eqs:trt}, the various derivatives of~\eqref{eq:hedimua} evaluated at \((x,y)=(X,Y)\) lead to six \ode{}s for the six $u_0$~coefficients:
\begin{subequations}\label{eqs:5pde}%
\begin{eqnarray}&&
\dot u_0^{00}=-\tfrac13u_0^{10}-\tfrac23u_1^{01}-\tfrac43u_2^{10},\\&&
\dot u_0^{10}=-\tfrac13u_0^{20}-\tfrac23u_1^{11}-\tfrac43u_2^{20},\\&&
\dot u_0^{01}=-\tfrac13u_0^{11}-\tfrac23u_1^{02}-\tfrac43u_2^{11},\\&&
\dot u_0^{20}=-u_{0x}^{20}-\tfrac43u_{1x}^{11}
-4u_{2x}^{20}-\tfrac23u_{1y}^{20},\\&&
\dot u_0^{11}=
-\tfrac43u_{1y}^{11}-\tfrac13u_{0y}^{20}
-\tfrac43u_{2y}^{20}-\tfrac23u_{0x}^{11}
-\tfrac83u_{2x}^{11}-\tfrac23u_{1x}^{02},\\&&
\dot u_0^{02}=
-2u_{1y}^{02}-\tfrac23u_{0y}^{11}
-\tfrac13u_{0x}^{02}-\tfrac83u_{2y}^{11}
-\tfrac43u_{2x}^{02}.
\label{eq:5cpde}
\end{eqnarray}
Similarly, after substituting~\eqref{eqs:trt}, the various derivatives of~\eqref{eq:hedimub} evaluated at \((x,y)=(X,Y)\) lead to six \ode{}s for the six $u_1$~coefficients,
\begin{eqnarray}&&
\dot u_1^{00} =  - u_{0}^{01}  - u_{1}^{00}  - u_{1}^{10}  - u_{2}^{01} ,
\\&&
\dot u_1^{10} =  - u_{0}^{11}  - u_{1}^{10}  - u_{1}^{20}  - u_{2}^{11} ,
\\&&
\dot u_1^{01} =  - u_{0}^{02}  - u_{1}^{01}  - u_{1}^{11}  - u_{2}^{02} ,
\\&&
\dot u_1^{20} =  - u_{1}^{20}  - u_{0y}^{20}  - u_{2y}^{20}  - 2u_{0x}^{11}  - 3u_{1x}^{20}  - 2u_{2x}^{11} ;
\\&&
\dot u_1^{11} =  - u_{1}^{11}  - 2u_{0y}^{11}  - u_{1y}^{20}  - 2u_{2y}^{11}  - u_{0x}^{02}  - 2u_{1x}^{11}  - u_{2x}^{02} ,
\qquad
\\&&
\dot u_1^{02} =  - u_{1}^{02}  - 3u_{0y}^{02}  - 2u_{1y}^{11}  - 3u_{2y}^{02}  - u_{1x}^{02} ,
\label{eq:5dpde}
\end{eqnarray}%
and the various derivatives of~\eqref{eq:hedimuc} evaluated at \((x,y)=(X,Y)\) lead to six \ode{}s for the six $u_2$~coefficients,
\begin{eqnarray}&&
\dot u_2^{00} =  - \tfrac23u_{0}^{10}  - \tfrac13u_{1}^{01}  - 3u_{2}^{00}  + \tfrac13u_{2}^{10} ,
\\&&
\dot u_2^{10} =  - \tfrac23u_{0}^{20}  - \tfrac13u_{1}^{11}  - 3u_{2}^{10}  + \tfrac13u_{2}^{20} ,
\\&&
\dot u_2^{01} =  - \tfrac23u_{0}^{11}  - \tfrac13u_{1}^{02}  - 3u_{2}^{01}  + \tfrac13u_{2}^{11} ,
\\&&
\dot u_2^{20} =  - 3u_{2}^{20}  - \tfrac13u_{1y}^{20}  - 2u_{0x}^{20}  - \tfrac23u_{1x}^{11}  + u_{2x}^{20} .
\\&&
\dot u_2^{11} =  - 3u_{2}^{11}  - \tfrac23u_{0y}^{20}  - \tfrac23u_{1y}^{11}  + \tfrac13u_{2y}^{20}  
- \tfrac43u_{0x}^{11}  - \tfrac13u_{1x}^{02}  + \tfrac23u_{2x}^{11} ,
\qquad\\&&
\dot u_2^{02} =  - 3u_{2}^{02}  - \tfrac43u_{0y}^{11}  - u_{1y}^{02}  + \tfrac23u_{2y}^{11}  - \tfrac23u_{0x}^{02}  + \tfrac13u_{2x}^{02} ,
\label{eq:5epde}
\end{eqnarray}%
\end{subequations}

The functions \(u_{jx}^{mn}\) and~\(u_{jy}^{mn}\) for \(m+n=2\) that appear in~\eqref{eqs:5pde} are part of the closure problem for the local dynamics: 
the derivatives \(u_{jx}^{mn}\) and~\(u_{jy}^{mn}\) couple the dynamics at a station~$(X,Y)$ with the dynamics at neighbouring stations.
It is by treating terms in \(u_{jx}^{mn}\) and~\(u_{jy}^{mn}\) as `uncertain', time-dependent, inputs into the local dynamics that we notionally make the vast simplification in \emph{apparently} reducing the problem from one of an infinite dimensional dynamical system to a tractable eighteen dimensional system.

\subsection{The slow subspace emerges}
\label{sec:sse}

For a dynamical system approach to modelling the local dynamics, define the state vector $\uv(X,Y,t)=(u^{00}, u^{10}, u^{01}, u^{20}, u^{11}, u^{02})$ and group the eighteen \ode{}s~\eqref{eqs:5pde} into the matrix-vector system, of the form $\de t{\uv}=\cL\uv+\rvv(t)$,
\begin{subequations}\label{eqs:odes}%
\begin{equation}
\de t{\uv}=\underbrace{\begin{bmatrix} 
\fL_{0,0}&\fL_{1,0}&\fL_{0,1}\\
&\fL_{0,0}&&\fL_{1,0}&\fL_{0,1}\\
&&\fL_{0,0}&&\fL_{1,0}&\fL_{0,1}\\
&&&\fL_{0,0}\\
&&&&\fL_{0,0}\\
&&&&&\fL_{0,0}
\end{bmatrix}}_{\cL}\uv
+\underbrace{\begin{bmatrix} 0\\0\\0\\\rv_{2,0}\\
\rv_{1,1}\\\rv_{0,2} \end{bmatrix}}_{\rvv(t)}
\label{eq:odes}
\end{equation}
where blanks are zero-blocks, and where
\begin{equation}
\fL_{0,0}:=\begin{bmatrix} 0&0&0\\0&-1&0\\0&0&-3 \end{bmatrix},
\quad
\fL_{1,0}:=\begin{bmatrix} -\frac13&0&-\frac43\\0&-1&0\\-\frac23&0&+\frac13 \end{bmatrix},
\quad
\fL_{0,1}:=\begin{bmatrix} 0&-\frac23&0\\-1&0&-1\\0&-\frac13&0 \end{bmatrix},\label{eq:Lmat}
\end{equation}
and where \(\rv_{m,n}(t)\) in~\eqref{eq:odes} contain the appropriate terms from~\eqref{eqs:5pde} involving the definite but `uncertain'~\(u_{jx}^{mn}\) and~\(u_{jy}^{mn}\). 
\end{subequations}

The hierarchical structure of the matrix in~\eqref{eqs:odes} directly reflects the approach: the blocks~\(\fL_{m,n}\) directly encode the various derivatives appearing in the original governing \pde{}s~\eqref{eqs:hedimu}; whereas the structure of these blocks in~\(\cL\) are a consequence of the multivariate Taylor expansion~\eqref{eqs:trt} and its derivatives.

\paragraph{Local slow subspace}
The system~\eqref{eqs:odes} appears in the form of a `forced' linear system, so our first task is to understand the corresponding linear homogeneous system obtained by omitting the `forcing' (although here the the `forcing' is the uncertain coupling with neighbouring locales).
The corresponding homogeneous system is upper triangular, so its eigenvalues are those of~\(\fL_{0,0}\), namely $0$, $-1$ and~\(-3\) each with multiplicity six.
The twelve negative eigenvalues indicates that after transients decay, roughly like~\Ord{e^{-t}}, the system evolves on the 6D slow subspace of the six eigenvalues~\(0\).

Let's construct this 6D slow subspace.
Two eigenvectors corresponding to the zero eigenvalue are found immediately, namely
\begin{subequations}\label{eqs:bss6}%
\begin{align}&
\vv^{00}:=(1,0,\ldots,0),\\&
\vv^{01}=(0,-1,0,0,0,0,1,0\ldots,0).
\end{align}
A further four linearly independent vectors are generalised  eigenvectors:
\begin{align}&
\vv^{10}:=(0,0,-\tfrac29,1,0,\ldots,0),\\&
\vv^{20}:=(0,0,-\tfrac4{81},0,0,-\tfrac29,0,0,0,1,0,\ldots,0),\\&
\vv^{11}=(0,\tfrac89,0,0,-1,0,0,0,-\tfrac29,0,0,0,1,0,\ldots,0),\\&
\vv^{02}:=(0,0,\tfrac19,0,0,0,0,-1,0,0,0,0,0,0,0,1,0,0).
\end{align}
\end{subequations}
Setting the matrix $\cV :=\begin{bmatrix} \vv^{00}&\cdots&\vv^{02} \end{bmatrix}\in\RR^{18\times6}$, the slow subspace is then $\uv=\cV \uv_0$ where we conveniently choose to use $\uv_0:=(u_0^{00},\ldots,u_0^{02})\in\RR^6$ to directly parametrise the slow subspace because of the form chosen for the eigenvectors~$\vv^{mn}$.
On this slow subspace, from the eigenvectors via $\uv=\cV \uv_0$\,, the difference variables 
\begin{align*}&
\uv_1:=\big(
-u_0^{01}+\tfrac89u_0^{11}
,-u_0^{11}
,-u_0^{02}
,0,0,0\big),
\\&
\uv_2:=\big(
\tfrac19u_0^{02}-\tfrac29u_0^{10}-\tfrac4{81}u_0^{20}
,-\tfrac29u_0^{20}
,-\tfrac29u_0^{11}
,0,0,0\big).
\end{align*}
Further, on this slow subspace the evolution is determined by the upper triangular matrix in
\begin{equation}
\de t{\uv_0}=\bA\uv_0=\begin{bmatrix} 
0&-\frac13&0&\frac8{27}&0&\frac23\\ 
0&0&0&-\frac13&0&0\\ 
0&0&0&0&-\frac13&0\\ 
0&0&0&0&0&0\\ 
0&0&0&0&0&0\\ 
0&0&0&0&0&0\\ 
\end{bmatrix}\uv_0
\label{eq:A}
\end{equation}
The large number of zeros in the \ode\ system~\eqref{eq:A} reflects the zero eigenvalues, the zero-mean of the \(y\)~velocities for the three layers, and a useful upper triangular nature.
The non-zero elements in the first row give \(\dot u_0^{00}=
-\tfrac13u_0^{10}+\tfrac8{27}u_0^{20}+\tfrac23u_0^{02}\) which directly corresponds to the  macroscale model \pde~\eqref{eq:appdiff}.
The novelty of our approach is that we now go beyond such a basic model to quantitatively determine remainders---these remainders are  quantifiable errors in the model.

\subsection{Time dependent normal form}
\label{sec:tdnf}

Near identity coordinate transforms underpin modelling dynamics.
In particular, time-dependent coordinate transforms empower  understanding of the modelling of non-autonomous, and stochastic, dynamical systems \cite[e.g.]{Aulbach99, Arnold98, Roberts06k}.
This section analogously uses a time-dependent coordinate transformation to separate exactly the slow and fast modes of the system~\eqref{eqs:odes} in the presence of the `uncertain forcing' that couples the dynamics to neighbouring stations.
This is the first time the effects of such coupling have been quantified in multiscale problems with multiple large dimensions.

The coordinate transform introduces new dependent variables~\(\Uv(t)\).
In some sense, the new variables \(\uv\approx \Uv\) so the coordinate transform is `near identity'.
Let's choose to parametrise precisely the slow subspace of the system~\eqref{eqs:odes} by the variables~\(\Uv_0\): that is, on the subspace where the new stable variables \(\Uv_1=\Uv_2=\vec 0\),  we insist on the exact identity \(\uv=\Uv\).
This choice simplifies subsequent construction of slowly varying models such as~\eqref{eq:appdiff}.

In the coordinate transform, the effects of the uncertain forcings appear as integrals over their past history.
For any \(\mu>0\)\,, we define the convolution
\begin{equation}
\ou\big(w(t),tt,-\mu\big):=\int_0^te^{\mu(\tau-t)}w(\tau)\,d\tau\,.
\label{eq:con}
\end{equation}
Consequently,  the time derivative 
\(\de t{(\ou\big(w,tt,-\mu\big))}=-\mu\ou\big(w,tt,-\mu\big)+w\) which is a key property in the upcoming analysis.

Well established iteration described elsewhere \cite[e.g.]{Roberts06k} constructs the coordinate transformation.
The details are not significant here, all we need are the results.
The computer algebra code of \autoref{sec:camhe}, for the case \(N=2\), produces the following exact coordinate transform~\eqref{eqs:uU}: there is no neglect of any `small' terms.

\newcommand{\z}[2]{e^{\ifnum#2=-1 -\else#2\fi t}{\star}#1}
\newcommand{\parz}[1]{\parbox[t]{0.8\linewidth}{\raggedright\sloppy$#1$}}
\newif\ifyrog
\yrogtrue

Invoke the following time dependent, coordinate transform, \(\Uv\mapsto\uv\)\,: \ifyrog where the ellipses represent many terms we choose not to present so that you can more easily appreciate the overall structure,\else in full excruciating detail,\fi
\begin{subequations}\label{eqs:uU}%
\begin{align}
u^{00}_{0}={}&\parz{ \frac{8}{81} U^{20}_{2}+\frac{4}{9} U^{10}_{2}-\frac{2}{9} U
^{02}_{2}-\frac{16}{27} U^{11}_{1}+\frac{2}{3} U^{01}_{1}+U^{00}_{0}
,}\label{eq:ux000}\\
u^{10}_{0}={}&\parz{ \frac{4}{9} U^{20}_{2}+\frac{2}{3} U^{11}_{1}+U^{10}_{0}
,}\\
u^{01}_{0}={}&\parz{ \frac{4}{9} U^{11}_{2}+\frac{2}{3} U^{02}_{1}+U^{01}_{0}
,}\\
u^{20}_{0}={}&\parz{ U^{20}_{0}
,}\\
u^{11}_{0}={}&\parz{ U^{11}_{0}
,}\\
u^{02}_{0}={}&\parz{ U^{02}_{0}
,}\label{eq:ux002}\\
u^{00}_{1}={}&\parz{ U^{00}_{1}+\frac{8}{9} U^{11}_{0}-U^{01}_{0}+\z{u^{20}_{2y}
}{-1}+
{}\ifyrog\cdots\else
\frac{5}{27} \z{u^{20}_{2y}}{-3}+2 \z{u^{11}_{2x}}{-1}+\frac{
10}{27} \z{u^{11}_{2x}}{-3}+\frac{1}{4} \z{u^{02}_{2y}}{-1}-\frac{1
}{4} \z{u^{02}_{2y}}{-3}+\frac{13}{6} \z{u^{11}_{1y}}{-1}-\frac{53}{
54} \z{u^{11}_{1y}}{-3}+\frac{13}{12} \z{u^{02}_{1x}}{-1}-\frac{53}{
108} \z{u^{02}_{1x}}{-3}+\z{u^{20}_{0y}}{-1}-\frac{19}{27} \z{u^{20}
_{0y}}{-3}+2 \z{u^{11}_{0x}}{-1}-\frac{38}{27} \z{u^{11}_{0x}}{-3}
+\frac{1}{4} \z{u^{02}_{0y}}{-1}-\frac{1}{4} \z{u^{02}_{0y}}{-3}+
\frac{3}{2} \z{\z{u^{20}_{2y}}{-1}}{-1}+\frac{1}{18} \z{\z{u^{20}_{2
y}}{-3}}{-3}+3 \z{\z{u^{11}_{2x}}{-1}}{-1}+\frac{1}{9} \z{\z{u^{
11}_{2x}}{-3}}{-3}+\frac{3}{2} \z{\z{u^{02}_{2y}}{-1}}{-1}+2 \z{
\z{u^{11}_{1y}}{-1}}{-1}-\frac{1}{9} \z{\z{u^{11}_{1y}}{-3}}{-3}
+\z{\z{u^{02}_{1x}}{-1}}{-1}-\frac{1}{18} \z{\z{u^{02}_{1x}}{-3}
}{-3}-\frac{1}{9} \z{\z{u^{20}_{0y}}{-3}}{-3}-\frac{2}{9} \z{\z{u^{
11}_{0x}}{-3}}{-3}+\frac{3}{2} \z{\z{u^{02}_{0y}}{-1}}{-1}-\z{\z
{\z{u^{20}_{2y}}{-1}}{-1}}{-1}-2 \z{\z{\z{u^{11}_{2x}}{-1}}{-1
}}{-1}-3 \z{\z{\z{u^{20}_{1x}}{-1}}{-1}}{-1}-\z{\z{\z{u^{20}_{0y
}}{-1}}{-1}}{-1}
\fi{}
-2 \z{\z{\z{u^{11}_{0x}}{-1}}{-1}}{-1}
,}\\
u^{10}_{1}={}&\parz{ U^{10}_{1}-U^{11}_{0}-\frac{3}{2} \z{u^{20}_{2y}}{-1}+
{}\ifyrog\cdots\else
\frac{1}{6} \z{u^{20}_{2y}}{-3}-3 \z{u^{11}_{2x}}{-1}+\frac{1}{3} \z
{u^{11}_{2x}}{-3}-\z{u^{11}_{1y}}{-1}-\frac{1}{3} \z{u^{11}_{1y}}{
-3}-\frac{1}{2} \z{u^{02}_{1x}}{-1}-\frac{1}{6} \z{u^{02}_{1x}}{-3}-
\frac{1}{3} \z{u^{20}_{0y}}{-3}-\frac{2}{3} \z{u^{11}_{0x}}{-3}+\z{
\z{u^{20}_{2y}}{-1}}{-1}+2 \z{\z{u^{11}_{2x}}{-1}}{-1}+3 \z{\z{u
^{20}_{1x}}{-1}}{-1}+\z{\z{u^{20}_{0y}}{-1}}{-1}
\fi{}
+2 \z{\z{u^{11}_
{0x}}{-1}}{-1}
,}\\
u^{01}_{1}={}&\parz{ U^{01}_{1}-U^{02}_{0}-3 \z{u^{11}_{2y}}{-1}+
{}\ifyrog\cdots\else
\frac{1}{3} \z{
u^{11}_{2y}}{-3}-\frac{3}{2} \z{u^{02}_{2x}}{-1}+\frac{1}{6} \z{u^{0
2}_{2x}}{-3}-\frac{3}{2} \z{u^{02}_{1y}}{-1}-\frac{1}{2} \z{u^{02}_{
1y}}{-3}-\frac{2}{3} \z{u^{11}_{0y}}{-3}-\frac{1}{3} \z{u^{02}_{0x}
}{-3}+2 \z{\z{u^{11}_{2y}}{-1}}{-1}+\z{\z{u^{02}_{2x}}{-1}}{-1}
+\z{\z{u^{20}_{1y}}{-1}}{-1}+2 \z{\z{u^{11}_{1x}}{-1}}{-1}+2 \z{
\z{u^{11}_{0y}}{-1}}{-1}
\fi{}
+\z{\z{u^{02}_{0x}}{-1}}{-1}
,}\\
u^{20}_{1}={}&\parz{ U^{20}_{1}-\z{u^{20}_{2y}}{-1}-2 \z{u^{11}_{2x}}{-1}-3 \z
{u^{20}_{1x}}{-1}-\z{u^{20}_{0y}}{-1}-2 \z{u^{11}_{0x}}{-1}
,}\\
u^{11}_{1}={}&\parz{ U^{11}_{1}-2 \z{u^{11}_{2y}}{-1}-
{}\ifyrog\cdots\else
\z{u^{02}_{2x}}{-1}-\z{u
^{20}_{1y}}{-1}-2 \z{u^{11}_{1x}}{-1}-2 \z{u^{11}_{0y}}{-1}
\fi{}-\z{u^{
02}_{0x}}{-1}
,}\\
u^{02}_{1}={}&\parz{ U^{02}_{1}-3 \z{u^{02}_{2y}}{-1}-2 \z{u^{11}_{1y}}{-1}-\z
{u^{02}_{1x}}{-1}-3 \z{u^{02}_{0y}}{-1}
,}\\
u^{00}_{2}={}&\parz{ U^{00}_{2}-\frac{4}{81} U^{20}_{0}-\frac{2}{9} U^{10}_{0}+
\frac{1}{9} U^{02}_{0}-\frac{16}{81} \z{u^{20}_{2x}}{-3}+
{}\ifyrog\cdots\else
\frac{47}{54}
 \z{u^{11}_{2y}}{-1}-\frac{31}{54} \z{u^{11}_{2y}}{-3}+\frac{47}{108
} \z{u^{02}_{2x}}{-1}-\frac{31}{108} \z{u^{02}_{2x}}{-3}+\frac{5}{27
} \z{u^{20}_{1y}}{-1}-\frac{53}{243} \z{u^{20}_{1y}}{-3}+\frac{10}{
27} \z{u^{11}_{1x}}{-1}-\frac{106}{243} \z{u^{11}_{1x}}{-3}+\frac{1
}{4} \z{u^{02}_{1y}}{-1}-\frac{1}{36} \z{u^{02}_{1y}}{-3}-\frac{4}{
81} \z{u^{20}_{0x}}{-3}+\frac{10}{27} \z{u^{11}_{0y}}{-1}-\frac{8}{
27} \z{u^{11}_{0y}}{-3}+\frac{5}{27} \z{u^{02}_{0x}}{-1}-\frac{4}{27
} \z{u^{02}_{0x}}{-3}-\frac{16}{27} \z{\z{u^{20}_{2x}}{-3}}{-3}-
\frac{1}{3} \z{\z{u^{11}_{2y}}{-1}}{-1}-\frac{2}{9} \z{\z{u^{11}_{2y
}}{-3}}{-3}-\frac{1}{6} \z{\z{u^{02}_{2x}}{-1}}{-1}-\frac{1}{9} 
\z{\z{u^{02}_{2x}}{-3}}{-3}-\frac{1}{6} \z{\z{u^{20}_{1y}}{-1}}{
-1}-\frac{1}{162} \z{\z{u^{20}_{1y}}{-3}}{-3}-\frac{1}{3} \z{\z{u^{1
1}_{1x}}{-1}}{-1}-\frac{1}{81} \z{\z{u^{11}_{1x}}{-3}}{-3}+
\frac{1}{6} \z{\z{u^{02}_{1y}}{-3}}{-3}+\frac{14}{27} \z{\z{u^{20}_{
0x}}{-3}}{-3}-\frac{1}{3} \z{\z{u^{11}_{0y}}{-1}}{-1}+\frac{1}{9
} \z{\z{u^{11}_{0y}}{-3}}{-3}-\frac{1}{6} \z{\z{u^{02}_{0x}}{-1}
}{-1}+\frac{1}{18} \z{\z{u^{02}_{0x}}{-3}}{-3}+\frac{1}{9} \z{\z{\z
{u^{20}_{2x}}{-3}}{-3}}{-3}-\frac{1}{27} \z{\z{\z{u^{20}_{1y}}{
-3}}{-3}}{-3}-\frac{2}{27} \z{\z{\z{u^{11}_{1x}}{-3}}{-3}}{-3}
\fi{}
-\frac{2}{9} \z{\z{\z{u^{20}_{0x}}{-3}}{-3}}{-3}
,}\\
u^{10}_{2}={}&\parz{ U^{10}_{2}-\frac{2}{9} U^{20}_{0}-\frac{8}{9} \z{u^{20}_{2x}
}{-3}
{}\ifyrog\cdots\else
+\frac{1}{3} \z{u^{11}_{2y}}{-1}-\frac{1}{3} \z{u^{11}_{2y}}{
-3}+\frac{1}{6} \z{u^{02}_{2x}}{-1}-\frac{1}{6} \z{u^{02}_{2x}}{-3}+
\frac{1}{6} \z{u^{20}_{1y}}{-1}-\frac{17}{54} \z{u^{20}_{1y}}{-3}+
\frac{1}{3} \z{u^{11}_{1x}}{-1}-\frac{17}{27} \z{u^{11}_{1x}}{-3}-
\frac{2}{9} \z{u^{20}_{0x}}{-3}+\frac{1}{3} \z{u^{11}_{0y}}{-1}-
\frac{1}{3} \z{u^{11}_{0y}}{-3}+\frac{1}{6} \z{u^{02}_{0x}}{-1}-
\frac{1}{6} \z{u^{02}_{0x}}{-3}+\frac{1}{3} \z{\z{u^{20}_{2x}}{-3}
}{-3}-\frac{1}{9} \z{\z{u^{20}_{1y}}{-3}}{-3}-\frac{2}{9} \z{\z{u^{
11}_{1x}}{-3}}{-3}
\fi{}
-\frac{2}{3} \z{\z{u^{20}_{0x}}{-3}}{-3}
,}\\
u^{01}_{2}={}&\parz{ U^{01}_{2}-\frac{2}{9} U^{11}_{0}-\frac{8}{27} \z{u^{20}_{2y}
}{-3}
{}\ifyrog\cdots\else
-\frac{16}{27} \z{u^{11}_{2x}}{-3}+\frac{1}{2} \z{u^{02}_{2y}
}{-1}-\frac{1}{2} \z{u^{02}_{2y}}{-3}+\frac{1}{3} \z{u^{11}_{1y}}{
-1}-\frac{17}{27} \z{u^{11}_{1y}}{-3}+\frac{1}{6} \z{u^{02}_{1x}}{-1
}-\frac{17}{54} \z{u^{02}_{1x}}{-3}-\frac{2}{27} \z{u^{20}_{0y}}{-3}
-\frac{4}{27} \z{u^{11}_{0x}}{-3}+\frac{1}{2} \z{u^{02}_{0y}}{-1}-
\frac{1}{2} \z{u^{02}_{0y}}{-3}+\frac{1}{9} \z{\z{u^{20}_{2y}}{-3}
}{-3}+\frac{2}{9} \z{\z{u^{11}_{2x}}{-3}}{-3}-\frac{2}{9} \z{\z{u^{
11}_{1y}}{-3}}{-3}-\frac{1}{9} \z{\z{u^{02}_{1x}}{-3}}{-3}-
\frac{2}{9} \z{\z{u^{20}_{0y}}{-3}}{-3}
\fi{}
-\frac{4}{9} \z{\z{u^{11}_{0x
}}{-3}}{-3}
,}\\
u^{20}_{2}={}&\parz{ U^{20}_{2}+\z{u^{20}_{2x}}{-3}-\frac{1}{3} \z{u^{20}_{1y}
}{-3}-\frac{2}{3} \z{u^{11}_{1x}}{-3}-2 \z{u^{20}_{0x}}{-3}
,}\\
u^{11}_{2}={}&\parz{ U^{11}_{2}+\frac{1}{3} \z{u^{20}_{2y}}{-3}+
{}\ifyrog\cdots\else
\frac{2}{3} \z{u
^{11}_{2x}}{-3}-\frac{2}{3} \z{u^{11}_{1y}}{-3}-\frac{1}{3} \z{u^{02
}_{1x}}{-3}-\frac{2}{3} \z{u^{20}_{0y}}{-3}
\fi{}
-\frac{4}{3} \z{u^{11}_{0
x}}{-3}
,}\\
u^{02}_{2}={}&\parz{ U^{02}_{2}+\frac{2}{3} \z{u^{11}_{2y}}{-3}+
{}\ifyrog\cdots\else
\frac{1}{3} \z{u
^{02}_{2x}}{-3}-\z{u^{02}_{1y}}{-3}-\frac{4}{3} \z{u^{11}_{0y}}{-3
}
\fi{}-\frac{2}{3} \z{u^{02}_{0x}}{-3}
.}
\end{align}
\end{subequations}
In these new variables~\(\Uv\) the original system~\eqref{eqs:odes} is exactly the separated system\ifyrog\else, in glorious detail,\fi
\begin{subequations}\label{eqs:dUdt}%
\begin{align}
\dot U^{00}_{0}={}&\parz{ \frac{8}{27} U^{20}_{0}-\frac{1}{3} U^{10}_{0}+\frac{2}{
3} U^{02}_{0}+\frac{32}{27} \z{u^{20}_{2x}}{-3}
{}\ifyrog\cdots\else
+\frac{14}{9} \z{u^{11}
_{2y}}{-1}+\frac{2}{9} \z{u^{11}_{2y}}{-3}+\frac{7}{9} \z{u^{02}_{2x
}}{-1}+\frac{1}{9} \z{u^{02}_{2x}}{-3}-\frac{2}{9} \z{u^{20}_{1y}}{
-1}+\frac{34}{81} \z{u^{20}_{1y}}{-3}-\frac{4}{9} \z{u^{11}_{1x}}{
-1}+\frac{68}{81} \z{u^{11}_{1x}}{-3}+\z{u^{02}_{1y}}{-1}+\frac{1}{3
} \z{u^{02}_{1y}}{-3}+\frac{8}{27} \z{u^{20}_{0x}}{-3}-\frac{4}{9} 
\z{u^{11}_{0y}}{-1}+\frac{8}{9} \z{u^{11}_{0y}}{-3}-\frac{2}{9} \z{u
^{02}_{0x}}{-1}+\frac{4}{9} \z{u^{02}_{0x}}{-3}-\frac{4}{9} \z{\z{u
^{20}_{2x}}{-3}}{-3}-\frac{4}{3} \z{\z{u^{11}_{2y}}{-1}}{-1}-
\frac{2}{3} \z{\z{u^{02}_{2x}}{-1}}{-1}-\frac{2}{3} \z{\z{u^{20}_{1y
}}{-1}}{-1}+\frac{4}{27} \z{\z{u^{20}_{1y}}{-3}}{-3}-\frac{4}{3}
 \z{\z{u^{11}_{1x}}{-1}}{-1}+\frac{8}{27} \z{\z{u^{11}_{1x}}{-3}
}{-3}+\frac{8}{9} \z{\z{u^{20}_{0x}}{-3}}{-3}-\frac{4}{3} \z{\z{u^{
11}_{0y}}{-1}}{-1}
\fi{}
-\frac{2}{3} \z{\z{u^{02}_{0x}}{-1}}{-1}
,}\label{eq:dU000dt}\\
\dot U^{10}_{0}={}&\parz{ -\frac{1}{3} U^{20}_{0}-\frac{4}{3} \z{u^{20}_{2x}}{-3
}
{}\ifyrog\cdots\else
+\frac{4}{3} \z{u^{11}_{2y}}{-1}+\frac{2}{3} \z{u^{02}_{2x}}{-1}+
\frac{2}{3} \z{u^{20}_{1y}}{-1}+\frac{4}{9} \z{u^{20}_{1y}}{-3}+
\frac{4}{3} \z{u^{11}_{1x}}{-1}+\frac{8}{9} \z{u^{11}_{1x}}{-3}+
\frac{8}{3} \z{u^{20}_{0x}}{-3}+\frac{4}{3} \z{u^{11}_{0y}}{-1}
\fi{}
+\frac{2}{3} \z{u^{02}_{0x}}{-1}
,}\\
\dot U^{01}_{0}={}&\parz{ -\frac{1}{3} U^{11}_{0}-\frac{4}{9} \z{u^{20}_{2y}}{-3
}-
{}\ifyrog\cdots\else
\frac{8}{9} \z{u^{11}_{2x}}{-3}+2 \z{u^{02}_{2y}}{-1}+\frac{4}{3} 
\z{u^{11}_{1y}}{-1}+\frac{8}{9} \z{u^{11}_{1y}}{-3}+\frac{2}{3} \z{u
^{02}_{1x}}{-1}+\frac{4}{9} \z{u^{02}_{1x}}{-3}+\frac{8}{9} \z{u^{20
}_{0y}}{-3}+\frac{16}{9} \z{u^{11}_{0x}}{-3}
\fi{}
+2 \z{u^{02}_{0y}}{-1}
,}\\
\dot U^{20}_{0}={}&\parz{ -4 u^{20}_{2x}-\frac{2}{3} u^{20}_{1y}-\frac{4}{3} u^{11
}_{1x}-u^{20}_{0x}
,}\\
\dot U^{11}_{0}={}&\parz{ -\frac{4}{3} u^{20}_{2y}-\frac{8}{3} u^{11}_{2x}-\frac{4
}{3} u^{11}_{1y}-\frac{2}{3} u^{02}_{1x}-\frac{1}{3} u^{20}_{0y}-\frac{2
}{3} u^{11}_{0x}
,}\\
\dot U^{02}_{0}={}&\parz{ -\frac{8}{3} u^{11}_{2y}-\frac{4}{3} u^{02}_{2x}-2 u^{02
}_{1y}-\frac{2}{3} u^{11}_{0y}-\frac{1}{3} u^{02}_{0x}
,}\\
\dot U^{00}_{1}={}&\parz{ -\frac{4}{9} U^{11}_{2}-U^{01}_{2}-U^{10}_{1}-\frac{2}{3
} U^{02}_{1}-U^{00}_{1}
,}\label{eq:dU100dt}\\
\dot U^{10}_{1}={}&\parz{ -U^{11}_{2}-U^{20}_{1}-U^{10}_{1}
,}\\
\dot U^{01}_{1}={}&\parz{ -U^{02}_{2}-U^{11}_{1}-U^{01}_{1}
,}\\
\dot U^{20}_{1}={}&\parz{ -U^{20}_{1}
,}\\
\dot U^{11}_{1}={}&\parz{ -U^{11}_{1}
,}\\
\dot U^{02}_{1}={}&\parz{ -U^{02}_{1}
,}\\
\dot U^{00}_{2}={}&\parz{ -\frac{8}{27} U^{20}_{2}+\frac{1}{3} U^{10}_{2}-3 U^{00}
_{2}-\frac{4}{9} U^{11}_{1}-\frac{1}{3} U^{01}_{1}
,}\\
\dot U^{10}_{2}={}&\parz{ \frac{1}{3} U^{20}_{2}-3 U^{10}_{2}-\frac{1}{3} U^{11}_{
1}
,}\\
\dot U^{01}_{2}={}&\parz{ \frac{1}{3} U^{11}_{2}-3 U^{01}_{2}-\frac{1}{3} U^{02}_{
1}
,}\\
\dot U^{20}_{2}={}&\parz{ -3 U^{20}_{2}
,}\\
\dot U^{11}_{2}={}&\parz{ -3 U^{11}_{2}
,}\\
\dot U^{02}_{2}={}&\parz{ -3 U^{02}_{2}
.}
\label{eq:dU202dt}
\end{align}
\end{subequations}
In these new variables, the \ode{}s~\eqref{eq:dU100dt}--\eqref{eq:dU202dt} in this separated system immediately show that all the new stable variables \(U_1^{mn}, U_2^{mn}\to0\) as time \(t\to\infty\)\,.
Moreover, they decay exponentially quickly, \(U_1^{mn}, U_2^{mn}=\Ord{e^{-\gamma t}}\) for any chosen rate \(0<\gamma<1\).
That is, \(\Uv_1=\Uv_2=\vec 0\) is the exact slow subspace for the `forced' system~\eqref{eqs:odes}.

Further, and usefully, by the construction of~\eqref{eq:ux000}--\eqref{eq:ux002}, on this slow subspace \(\uv_0^{mn}=\Uv_0^{mn}\), exactly.

\subsection{The slowly varying model}
Recall the exact Taylor multinomial~\eqref{eqs:trt}.
Given the exact coordinate transform~\eqref{eqs:uU}, and that \(U_1^{mn}, U_2^{mn}=\Ord{e^{-\gamma t}}\), the Taylor multinomial~\eqref{eqs:trt} establishes that, based upon the station~\((X,Y)\), the mean field
\begin{eqnarray}
u_0(x,y,t)&=&u_0^{00}(X,Y,t)+u_0^{10}(X,Y,t)\pxt1 
+u_0^{01}(X,Y,t)\pyt1
\nonumber\\&&{}
+u_0^{20}(X,Y,x,y,t)\pxt2
+u_0^{11}(X,Y,x,y,t)\pxt1\pyt1
\nonumber\\&&{}
 +u_0^{02}(X,Y,x,y,t)\pyt2
+\Ord{e^{-\gamma t}}.
\label{eq:slowcid}
\end{eqnarray}
Crucially, the left-hand side is independent of the station~\((X,Y)\).
If the right-hand side was just a local approximation, then the field it generates would depend upon the station~\((X,Y)\).
But the right-hand side is exact (with its unknown but exponentially quickly decaying transients).
This exactness is maintained because we keep the remainder terms in the analysis.
Consequently, the mean field given by expression~\eqref{eq:slowcid} is independent of the station~\((X,Y)\) despite \((X,Y)\)~appearing on the right-hand side.

To obtain an exact \pde\ of the slow variations in the mean field~\(u_0\), take the time derivative of~\eqref{eq:slowcid} and evaluate at \((x,y)=(X,Y)\).  
Remembering the derivative of the history convolution, that \(\de t{}(\ou\big(w,tt,-\mu\big))=-\mu\ou\big(w,tt,-\big)+w\), the \ode~\eqref{eq:dU000dt} implies 
\begin{eqnarray*}
\D t{u_0}&=&
\D t{U_0^{00}}+\Ord{e^{-\gamma t}}
\\&=&\parz{-\frac{1}{3} U^{10}_{0} +\frac{8}{27} U^{20}_{0}+\frac{2}{
3} U^{02}_{0}+\frac{32}{27} \z{u^{20}_{2x}}{-3} +
{}\ifyrog\cdots\else
\frac{14}{9} \z{u^{11}
_{2y}}{-1}+\frac{2}{9} \z{u^{11}_{2y}}{-3}+\frac{7}{9} \z{u^{02}_{2x
}}{-1}+\frac{1}{9} \z{u^{02}_{2x}}{-3}-\frac{2}{9} \z{u^{20}_{1y}}{
-1}+\frac{34}{81} \z{u^{20}_{1y}}{-3}-\frac{4}{9} \z{u^{11}_{1x}}{
-1}+\frac{68}{81} \z{u^{11}_{1x}}{-3}+\z{u^{02}_{1y}}{-1}+\frac{1}{3
} \z{u^{02}_{1y}}{-3}+\frac{8}{27} \z{u^{20}_{0x}}{-3}-\frac{4}{9} 
\z{u^{11}_{0y}}{-1}+\frac{8}{9} \z{u^{11}_{0y}}{-3}-\frac{2}{9} \z{u
^{02}_{0x}}{-1}+\frac{4}{9} \z{u^{02}_{0x}}{-3}-\frac{4}{9} \z{\z{u
^{20}_{2x}}{-3}}{-3}-\frac{4}{3} \z{\z{u^{11}_{2y}}{-1}}{-1}-
\frac{2}{3} \z{\z{u^{02}_{2x}}{-1}}{-1}-\frac{2}{3} \z{\z{u^{20}_{1y
}}{-1}}{-1}+\frac{4}{27} \z{\z{u^{20}_{1y}}{-3}}{-3}-\frac{4}{3}
 \z{\z{u^{11}_{1x}}{-1}}{-1}+\frac{8}{27} \z{\z{u^{11}_{1x}}{-3}
}{-3}+\frac{8}{9} \z{\z{u^{20}_{0x}}{-3}}{-3}-\frac{4}{3} \z{\z{u^{
11}_{0y}}{-1}}{-1}
\fi{}
-\frac{2}{3} \z{\z{u^{02}_{0x}}{-1}}{-1}
+\Ord{e^{-\gamma t}}}
\\&=&\parz{-\frac{1}{3} u^{10}_{0} +\frac{8}{27} u^{20}_{0} +\frac{2}{
3} u^{02}_{0}
+\z{\big(
\frac{14}{9} u^{11}_{2y}
+\frac{7}{9} u^{02}_{2x}
-\frac{2}{9} u^{20}_{1y}
-\frac{4}{9} u^{11}_{1x}
+u^{02}_{1y}
-\frac{4}{9} u^{11}_{0y}
-\frac{2}{9} u^{02}_{0x}
\big)}{-1}
+\z{\big(
\frac{32}{27} u^{20}_{2x}
+\frac{2}{9} u^{11}_{2y}
+\frac{1}{9} u^{02}_{2x}
+\frac{34}{81} u^{20}_{1y}
+\frac{68}{81} u^{11}_{1x}
+\frac{1}{3} u^{02}_{1y}
+\frac{8}{27} u^{20}_{0x}
+\frac{8}{9} u^{11}_{0y}
+\frac{4}{9} u^{02}_{0x}
\big)}{-3}
+\z{\z{\big(
-\frac{4}{3} u^{11}_{2y}
-\frac{2}{3} u^{02}_{2x}
-\frac{2}{3} u^{20}_{1y}
-\frac{4}{3} u^{11}_{1x}
-\frac{4}{3} u^{11}_{0y}
-\frac{2}{3} u^{02}_{0x}
\big)}{-1}}{-1}
+\z{\z{\big(
-\frac{4}{9} u^{20}_{2x}
+\frac{4}{27} u^{20}_{1y}
+\frac{8}{27} u^{11}_{1x}
+\frac{8}{9} u^{20}_{0x}
\big)}{-3}}{-3}
+\Ord{e^{-\gamma t}}}
\\&=&-\frac{1}{3} \D x{u_0}+\frac{8}{27} \DD x{u_0}+\frac{2}{
3}\DD y{u_0}
+r
+\Ord{e^{-\gamma t}},
\end{eqnarray*}
for remainder
\begin{eqnarray*}
r&:=&\parz{ \z{\big(
\frac{14}{9} u^{11}_{2y}
+\frac{7}{9} u^{02}_{2x}
-\frac{2}{9} u^{20}_{1y}
-\frac{4}{9} u^{11}_{1x}
+u^{02}_{1y}
-\frac{4}{9} u^{11}_{0y}
-\frac{2}{9} u^{02}_{0x}
\big)}{-1}
+\z{\big(
\frac{32}{27} u^{20}_{2x}
+\frac{2}{9} u^{11}_{2y}
+\frac{1}{9} u^{02}_{2x}
+\frac{34}{81} u^{20}_{1y}
+\frac{68}{81} u^{11}_{1x}
+\frac{1}{3} u^{02}_{1y}
+\frac{8}{27} u^{20}_{0x}
+\frac{8}{9} u^{11}_{0y}
+\frac{4}{9} u^{02}_{0x}
\big)}{-3}
+\z{\z{\big(
-\frac{4}{3} u^{11}_{2y}
-\frac{2}{3} u^{02}_{2x}
-\frac{2}{3} u^{20}_{1y}
-\frac{4}{3} u^{11}_{1x}
-\frac{4}{3} u^{11}_{0y}
-\frac{2}{3} u^{02}_{0x}
\big)}{-1}}{-1}
+\z{\z{\big(
-\frac{4}{9} u^{20}_{2x}
+\frac{4}{27} u^{20}_{1y}
+\frac{8}{27} u^{11}_{1x}
+\frac{8}{9} u^{20}_{0x}
\big)}{-3}}{-3},}
\end{eqnarray*}
that couples the local dynamics to its neighbouring locales.
Consequently, an exact statement of the mean field~\(u_0\) is thus
\begin{equation}
\D t{u_0}=
-\frac{1}{3} \D x{u_0}+\frac{8}{27} \DD x{u_0}+\frac{2}{
3}\DD y{u_0}
+r
+\Ord{e^{-\gamma t}}.\label{eq:pdeC}
\end{equation}
In principle, equation~\eqref{eq:pdeC} is an exact integro-differential equation for the system: the integral part coming from the history convolutions hidden within the coupling~\(r\) with other locales.
The macroscale approximation is to neglect both the transients and the integral coupling.
The rigorous, macroscale, slowly varying, model is then the Fokker--Planck\slash advection-diffusion \pde~\eqref{eq:appdiff} obtained from~\eqref{eq:pdeC} with \(\Ord{e^{-\gamma t}}\) neglected as a quickly decaying transient, and the uncertain coupling~$r$ neglected as its error.

Since the remainder coupling~\(r\) is a linear combination of the subtle spatial derivatives~\eqref{eq:uxyd} of second derivatives, here the magnitude of the neglected coupling is that of third order spatial derivatives.

The analysis detailed in this section is for the chosen truncation \(N=2\) in the multivariate Taylor series~\eqref{eqs:trt}.
Alternatively, upon choosing truncation \(N=3\) the computer algebra of \autoref{sec:camhe} derives the higher-order mean-field model%
\footnote{In some disciplines this \pde\ would be viewed as providing the next term in a Kramers--Moyal expansion of the mean random walk \pdf\ \cite[e.g.]{Pawula67}.}
\begin{equation*}
\D t{u_0}=
-\frac{1}{3} \D x{u_0}+\frac{8}{27} \DD x{u_0}+\frac{2}{
3}\DD y{u_0} +\frac{16}{243}\Dn x3{u_0} -\frac{20}{27}\frac{\partial^3u_0}{\partial x\partial y^2}
+r+\Ord{e^{-\gamma t}}
\end{equation*}
for some even more voluminous coupling remainder~\(r\).
Both such \pde{}s are examples of so-called mixed order models, as are the other \pde{}s derived with larger truncation~\(N\).
The extant mathematical methodologies of homogenisation and multiple scales promote an aversion to such mixed order models.
But our approach rigorously justifies such \pde{}s as mathematical models with the derived remainder~\(r\) being the quantifiable error.

\section{A PDE models interior plate dynamics}
\label{sec:pmid}

Inspired by the successful exact modelling of the random walker (heat exchanger) in \autoref{sec:mdhx}, this section establishes analogous exact modelling in the very wide class~\eqref{eq:upde} of linear systems of \pde{}s in multiple space dimensions.
\autoref{fig:schema} schematically overviews this section.

\begin{figure}
\centering
\tikzsetnextfilename{schema}
\tikzexternaldisable
\begin{tikzpicture}[
linearrow/.style = { draw, thick, -latex },
line/.style = { draw, thick},
method/.style = {diamond, draw=red, thick, fill=red!10,text centered,
 inner sep=1pt , rounded corners, aspect=6},
block/.style = { rectangle, draw=blue, thick, 
 fill=blue!10, text centered,
 rounded corners, minimum height=2em },
]
\matrix [column sep=5mm, row sep=5mm] {
\node[block] (u) {Original \pde~\eqref{eq:upde} for $u(\xv,y,t)$, $u\in\UU$ };\\
\node[method] (L) {LagrangeÕs Remainder Theorem \eqref{eqs:utaylor}};\\
\node[block] (un) {$m\tN$ \des~\eqref{eq:odesn} for $u^{(\nv)}(\Xv,y,t)$, $u^{(\nv)}\in\UU$};\\
 \node[method] (G) {Generating function $\cG$ \eqref{eq:Gn}};\\
\node[block] (tu) {local \pde~\eqref{eq:updeng} for $\tu(\xiv,\Xv,y,t)$, $\tu\in\UU_N$};\\
 \node[method] (LA) {Linear algebra on $\UU_N$, \S\S\ref{sec:mlas}--\ref{sec:hruf}};\\
\node[block] (c) {\(m\) \pde{}s~\eqref{eq:dcdta} for $U(\xv,t)$ with closure error~\eqref{eq:pderemain}};\\
 };
\begin{scope} [every path/.style=linearrow]
 \path (L) -- (un);
 \path (G)--(tu);
 \path (LA)--(c);
\end{scope}
\begin{scope} [every path/.style=line]
 \path (u) -- (L);
 \path (un)--(G);
 \path (tu)--(LA);
\end{scope}
\end{tikzpicture}
\caption{Flow chart describing the modelling scheme, from the original \pde\ for $u(\xv,y,t)$ to the macroscale model \pde\ for  the emergent `mean field' variables~$U(\xv,t)$.
Blue rectangles describe different stages of the modelling and red diamonds describe the method for obtaining one model from another.
}
\label{fig:schema}
\end{figure}
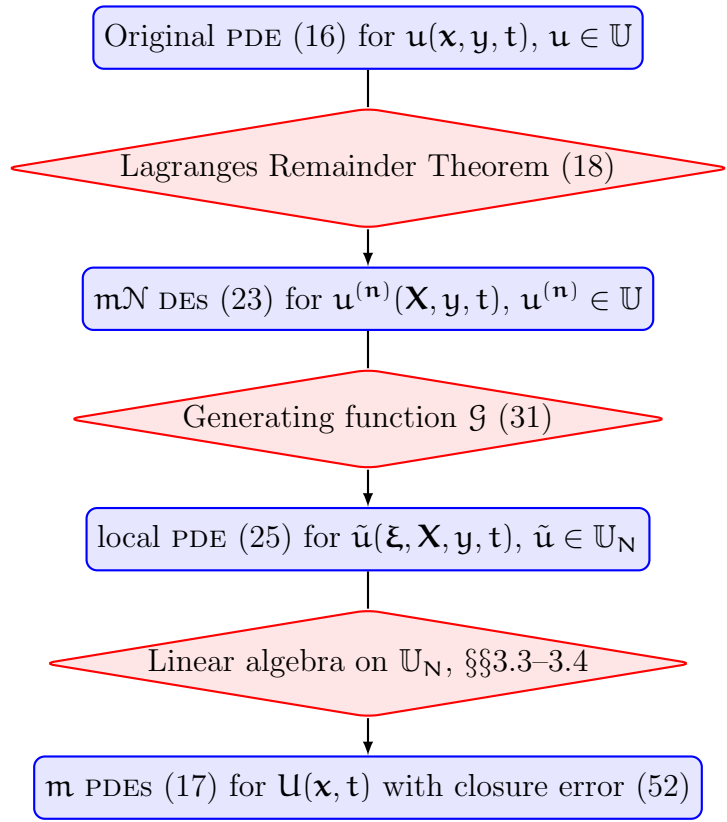

This section develops a rigorous approach to the modelling of fields in multiscale domains~\(\XX\times\YY\)\,.  
We suppose that \XX\ is an open domain in~\(\RR^M\) of large `macroscale' extent, called the plate, whereas \YY\ is a relatively small `microscale' cross-sectional domain in some Hilbert space.%
\footnote{For example, in the case of the modelling of an elastic plate, \YY~represents the thickness of the plate, and \XX~represents the relatively large width of the plate.
Alternatively,  if we were analysing the probability density function of some stochastic system, then space~\YY\ could be unbounded so long as there exists a quasi-stationary distribution in~\YY\ with operator~\(\fL_\ov\) having a suitable spectral gap \cite[\S21.2, e.g.]{Roberts2014a}.}
We consider the dynamics of some field~\(u(\xv,y,t)\) in a given Hilbert space~\UU\ (finite or infinite dimensional), where \(u:\XX\times\YY\times\RR\to\UU\) is a function of \(M\)-dimensional position \(\xv\in\XX\subset \RR^M\), cross-sectional position \(y\in\YY\), and time \(t\in\RR\) (in \S\ref{sec:mdhx} variable~\(y\) denoted a `large' dimension variable which in this general theory are all gathered into~\xv, whereas hereafter \(y\)~denotes position in the `thin' dimensions). 
We suppose the field~\(u(\xv,y,t)\) satisfies a specified \pde\ in the linear class 
\begin{equation}
\D tu=\fL_\ov u+\fL_{(1,0,\ldots,0)}\D{x_1}u
+\cdots+\fL_{(0,\ldots,0,1)}\D{x_M}u+\cdots
=\sum_{|\kv|=0}^{\infty} \fL_{\kv} \partial_{\xv}^{\kv} u \,,
\label{eq:upde}
\end{equation}
where the \(M\)-dimensional (mixed) derivative of order~\(|\kv|=k_1+k_2+\cdots+k_M\) is
\begin{equation*}
\partial_{\xv}^{\kv}=\frac{\partial^{|\kv|}}
{\partial x_1^{k_1}\partial x_2^{k_2}\cdots\partial x_M^{k_M}}\,,
\end{equation*}
and where the infinite sum is notionally written as being over all possible multi-indices~$\kv\in\NN_0^M$ (as usual, the set of natural numbers \(\NN_0:=\{0,1,2,\ldots\}\)). 
However, in application the sum of terms in the \pde~\eqref{eq:upde} typically truncate at the first or second order derivatives (as in the \pde~\eqref{eqs:hedim}). 
Nonetheless, our analysis caters for arbitrarily high order \pde{}s.
Such sums truncate as we do assume that only a finite number of operators~\(\cL_{\kv}\) are non-zero.  
Thus, such `infinite' sums truncate at some definite but unspecified order of~$|\kv|$.

The plate domain~\(\XX\) (open) may be finite, or infinite, or multi-periodic.
The plate domain~\XX\  generally excludes boundary layers and any internal `shocks' or `cracks'---it need not even be connected.
In application, the domain~\XX\ will be chosen a finite distance from boundary layers or shocks so that boundary layer structures have decayed enough for the remainder coupling effects (e.g.,~\eqref{eq:pdeC}) to be below some chosen threshold error on~\(\partial\XX\).
The `microscale' cross-section~\YY\ may be as simple as the index set~\(\{1,2,3\}\) as for the random walker\slash heat exchanger~\eqref{sec:mdhx}, or the whole of~\(\RR^\ydim\).
The operators~\(\fL_{\kv}\) are assumed autonomous and independent of in-plate position~\(\xv\); they only operate in the `microscale' cross-section~\YY.
Many problems which at first appear inhomogeneous in~\xv\ may be embedded into an ensemble of phase-shifted problems such that the resultant ensemble has operators~\(\fL_{\kv}\) independent of  position~\xv---see the example of \autoref{sec:ehmd2d}.

For \pde{}s in the general form~\eqref{eq:upde}, assume the field~\(u\) is smooth enough to have continuous \(2N\)~derivatives in~\(\xv\), \(u\in C^{2N}(\XX\times\YY\times\RR,\UU)\), for some pre-specified Taylor series truncation~\(N\). 
The choice of truncation~$N$ is only constrained by the differentiability class of field~$u$, and not by any truncation of the infinite~$k$ sum in~\eqref{eq:upde}.

This section, through to the very end of \autoref{sec:hruf}, establishes the following proposition.

\begin{proposition}[slowly varying \pde] \label{thm:svpde}
Let \(u(\xv,y,t)\) be governed by a linear \pde\ of the form~\eqref{eq:upde} satisfying Assumption~\ref{ass:spec}.  
Define the `macroscale field' \(U(\xv,t):=\langle Z^\ov(y),u(\xv,y,t)\rangle\), \(U:\XX\times\RR\to\RR^m\), for both~\(Z^\ov(y)\) and inner product of Definition~\ref{def:baseig}.
Then, in the regime of `slowly varying solutions' the macroscale field~\(U\) satisfies the \pde
\begin{equation}
\D tU=\sum_{|\nv|=0}^NA_{\nv}\partial_{\xv}^{\nv}U\,, \quad \xv\in\XX\,,
\label{eq:dcdta}
\end{equation}
in terms of \(m\times m\) matrices~\(A_{\nv}\) given by~\eqref{eq:toeplam}, 
to a closure error quantified by~\eqref{eq:pderemain}, and 
upon neglecting transients decaying exponentially quickly in time.
\end{proposition}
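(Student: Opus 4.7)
The plan is to generalise the station-by-station analysis of \autoref{sec:mdhx} to the abstract linear \pde~\eqref{eq:upde}. First I fix an arbitrary station $\Xv\in\XX$ and apply the multivariate Taylor's Remainder Theorem to expand $u(\xv,y,t)$ in powers of $(\xv-\Xv)$ up to degree~$N$, with an integral remainder of order $N{+}1$ analogous to~\eqref{eq:tayinrem}. Substituting into~\eqref{eq:upde}, differentiating $|\nv|$~times in~$\xv$ for each multi-index $|\nv|\le N$, and evaluating at $\xv=\Xv$, produces a finite system of $m\tN$ differential equations of the form~\eqref{eq:odesn} for coefficient fields $u^{(\nv)}(\Xv,y,t)\in\UU$; the key structural fact, just as in~\eqref{eqs:odes} of the example, is that coupling to neighbouring stations enters only through the top-order $|\nv|=N$ remainder terms, which appear as a time-dependent `uncertain forcing' on the right-hand side.

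Next, following the generating-function step of \autoref{fig:schema}, I encode this finite system as a single local \pde~\eqref{eq:updeng} for a test function $\tu(\xiv,\Xv,y,t)$ living in the polynomial space~$\UU_N$. The resulting linear operator on this extended space is block upper-triangular in~$\nv$ with every diagonal block equal to $\fL_\ov$, so by Assumption~\ref{ass:spec} its spectrum inherits a uniform gap between a finite-dimensional critical (near-zero) subspace and a strictly stable remainder. Using the left/right eigenbasis of $\fL_\ov$ supplied by Definition~\ref{def:baseig}, and lifting it through the off-diagonal operators $\fL_{\kv}$ for $|\kv|\ge 1$, I identify an $m\tN$-dimensional slow subspace and read off the matrices~$A_{\nv}$ by pairing the basis vectors with $Z^\ov$; these should match the Toeplitz-type formula~\eqref{eq:toeplam}.

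The heart of the argument is then a time-dependent, near-identity coordinate transform $\uv\mapsto\Uv$, constructed iteratively in the spirit of \autoref{sec:tdnf}, that exactly separates slow and stable coordinates in the presence of the non-autonomous forcing $\rvv(t)$. The history convolutions~\eqref{eq:con} absorb this forcing one spectral mode at a time, and the iteration closes precisely because of the spectral gap in Assumption~\ref{ass:spec}, invoking non-autonomous centre manifold theory in the style of \cite{Potzsche2006,Haragus2011}. On the transformed slow subspace, $\Uv_0$~evolves by an upper-triangular system whose leading block row, paired with $Z^\ov(y)$, becomes exactly~\eqref{eq:dcdta} for $U(\xv,t):=\langle Z^\ov(y),u(\xv,y,t)\rangle$. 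The closure error~\eqref{eq:pderemain} then has two contributions: residual history convolutions, which by integration against the exponentially decaying kernel reduce to integrals of the top-order Taylor remainder~\eqref{eq:Nintterm}, i.e.~of $(N{+}1)$st derivatives of~$u$; and an $\Ord{e^{-\gamma t}}$ off-slow-subspace transient.

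The main obstacle I anticipate is showing that the station-by-station construction glues coherently into a single global model on all of~$\XX$, i.e.~that the local slow-subspace representation at each~$\Xv$ produces the \emph{same} field $U(\xv,t)$ and the \emph{same} \pde. The crucial mechanism is the \emph{exactness} of Taylor's remainder theorem: because the right-hand side of the local representation (analogous to~\eqref{eq:slowcid}) is genuinely exact, not merely an approximation, it is forced to be independent of the nominally arbitrary base point~$\Xv$, and this is what promotes the family of local non-autonomous centre manifolds into the genuine infinite-dimensional centre manifold over the plate envisaged by \cite{Gallay93,Aulbach96,Aulbach2000}. Recasting the resulting convolution-heavy coupling error into the clean bound~\eqref{eq:pderemain} will likely demand integration-by-parts on the convolutions combined with the spectral decay estimates for $\fL_\ov$.
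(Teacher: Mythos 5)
Your high‑level plan follows the paper's Figure~\ref{fig:schema} faithfully through the Taylor/Lagrange rewrite~\eqref{eqs:utaylor}, the coupled local system~\eqref{eq:odesn}, the generating multinomial~\eqref{eq:ugen} and the block‑upper‑triangular operator~\(\tL\), and you correctly identify that the spectral gap of Assumption~\ref{ass:spec} repeats \(\tN\)~times and that the matrices~\(A_\nv\) arise from lifting the \(\fL_\ov\) eigenpair \((V^\ov,Z^\ov)\) through the \(\fL_\kv\) as in~\eqref{eqs:toep}. Your remark that the exactness of the Taylor remainder (not merely an asymptotic expansion) is what forces station‑independence and lets the locally constructed models glue consistently across~\XX\ is exactly the mechanism the paper relies on.

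Where you diverge is at the separation step. You propose to carry out, in the general setting, the \emph{time‑dependent near‑identity normal‑form transform} that Section~\ref{sec:tdnf} constructs iteratively for the random‑walker example, invoking non‑autonomous centre‑manifold theory to justify the convolutions absorbing the forcing one spectral mode at a time. The paper's actual proof (Sections~\ref{sec:mlas}--\ref{sec:hruf}) is more elementary: it performs a \emph{time‑independent} linear change of basis \(\tu=\tV\cU+\tW\cS\) to centre and stable coordinates (Lemma~\ref{lem:curlyV} constructs~\tV; Proposition~\ref{thm:ecnddt} gives the autonomous slow \pde~\eqref{eq:ecnddt}), decomposes the forcing \(\tr=\tV\tr_c+\tW\tr_s\), solves the stable equation~\eqref{eq:ecdeq} exactly by the Duhamel/variation‑of‑constants formula~\eqref{eq:qfntime}, and then projects onto~\(Z^\ov\). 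No iteration, no near‑identity transform. The pay‑off of the paper's route is that it yields the closure remainder~\eqref{eq:pderemain} in one pass as an explicit linear functional of \(\tr_c\), \(\tr_s\) and \(e^{\cB t}\star\tr_s\), whereas a normal‑form construction would generate nested history convolutions (as in the \(\Ord{e^{-t}\star\cdots}\) expressions of~\eqref{eqs:uU}) whose equivalence to~\eqref{eq:pderemain} would still have to be established. The pay‑off of your route is that it would show an \emph{exact} splitting of the slow and stable coordinate dynamics, which the paper only achieves up to the retained remainder; that is the stronger statement, but it requires proving the iteration closes in the general operator setting (which the paper sidesteps precisely by choosing the cheaper Duhamel argument). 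So: a valid alternative, but heavier than necessary, and not the route the paper actually takes to obtain~\eqref{eq:pderemain}.

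One small caution: the separated equation~\eqref{eq:ecdec} still contains the centre‑projected forcing~\(\tr_c\); even a perfect normal‑form transform does not eliminate forcing from the slow equation (see~\eqref{eq:dU000dt} in the example), it only makes the stable equations homogeneous. The centre‑forcing term \(\langle Z^\ov,\tV\tr_c\rangle_{\xiv=\ov}\) is a genuine contributor to~\eqref{eq:pderemain}, not merely a transient, so be sure your error accounting retains it alongside the \(e^{\cB t}\star\tr_s\) terms you already identify.
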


To cater for the intricacies of this problem we use a lot of symbols. \autoref{sec:nota} summarises the notation for convenient reference.

\subsection{Rewrite the local field}
\label{sec:rlf}

Choose a cross-section at plate station $\Xv\in\XX$\,, as shown schematically in \autoref{fig:hestation}.
Then, generalising~\eqref{eqs:utaylor1}, invoke the multivariate Lagrange's Remainder Theorem to write the field~\(u\) in terms of an \(N\)th~degree local Taylor multinomial about the cross-section \(\xv=\Xv\):
\begin{subequations}\label{eqs:utaylor}%
\begin{equation}
u(\xv,y,t)=\sum_{|\nv|=0}^{N-1}u^{(\nv)}(\Xv,y,t)\pxvt{\nv}
+\sum_{|\nv|=N}u^{(\nv)}(\Xv,\xv,y,t)\pxvt{\nv}\,,
\label{eq:utrtn}
\end{equation}
with the multi-index factorial \(\nv!:=n_1!n_2!\cdots n_M!\)\,,  the multi-index power \(\xv^{\nv}:=x_1^{n_1}x_2^{n_2}\cdots x_M^{n_M}\),
and where
\begin{itemize}
\item in the first sum, for \(|\nv|<N\)\,,
\begin{equation}
u^{(\nv)}(\Xv,y,t):=\partial_{\xv}^{\nv}u\big|_{\xv=\Xv}\,,
\label{eq:nterm}
\end{equation}
and $u^{(\nv)}:\XX\times\YY\times\RR\to\UU$\,.
\item whereas for the cases in the second sum of \(|\nv|=N\)\,, by Lagrange's Remainder Theorem for multivariate Taylor series~\cite[e.g.,][Eq.~(1.27)]{Taylor2011},  
\begin{equation}
u^{(\nv)}(\Xv,\xv,y,t):=N\int_0^1(1-s)^{N-1}\partial_{\xv}^{\nv}u\big|_{\Xv+s(\xv-\Xv)}\,ds\,, 
\label{eq:Nintterm}
\end{equation}
and $u^{(\nv)}:\XX\times\XX\times\YY\times\RR\to\UU$\,.
\end{itemize}
\end{subequations}
As the domain~\XX\ is open, Lagrange's Remainder Theorem~\eqref{eqs:utaylor} holds for all \(\xv\in\chi(\Xv)\subseteq\XX\) where \(\chi(\Xv)\)~is any open subset of~\XX\ such that for all points \(\xv\in\chi(\Xv)\) the convex combination \(\Xv+s(\xv-\Xv)\in\chi(\Xv)\) for every \(0\leq s\leq1\)\,.
The superscript notation~\(u^{(\nv)}\) reflects that the functions~\eqref{eq:nterm}--\eqref{eq:Nintterm} are fundamentally derivatives of the field~\(u\), even though they appear as independent variables in the analysis of this section (and as first posited by \cite{Roberts88a}).
Although at the highest order \(|\nv|=N\) the coefficient functions \(u^{(\nv)}(\Xv,\xv,y,t)\) in principle are well defined, in our macroscale modelling the spatial derivatives of these~\(u^{(\nv)}\) appear as an uncertain part of the modelling closure.  
The reason for the uncertainty is that, at the highest order \(|\nv|=N\)\,, the derivatives of these~\(u^{(\nv)}\) depend upon the field~\(u(\xv,y,t)\) between the station~\Xv\ and a point of interest~\xv.

\paragraph{Derive exact local DEs}
Generalising~\eqref{eqs:5pde}, let's derive some exact \des\ (either \ode{}s or \pde{}s depending upon~\YY) for the the evolution of the coefficients~\(u^{(\nv)}(\Xv,y,t)\) and~\(u^{(\nv)}(\Xv,\xv,y,t)\).
The specified \pde~\eqref{eq:upde} invokes various spatial derivatives of the field~\(u\).
Since the domain~\XX\ is open and the multivariate Taylor multinomial~\eqref{eq:utrtn}  applies in the open neighbourhood~\(\chi(\Xv)\) of each station~\Xv, and \(u\in C^{2N}\), the Taylor multinomial can be differentiated \(\ellv\)~times, \(|\ellv|\leq N\)\,.  
The multivariate Taylor multinomial~\eqref{eq:utrtn} gives, after some rearrangement
\begin{align}
\partial_{\xv}^{\ellv} u={}&{}\sum_{|\nv|=0}^{N-|\ellv|-1}u^{(\nv+\ellv)}\pxvt {\nv}
\nonumber\\{}&{}
+\sum_{|\nv|=N}\sum_{\mv=(\nv-\ellv)^\oplus}^{\nv}\binom{\ellv}{\nv-\mv}\partial_{\xv}^{\mv+\ellv-\nv}u^{(\nv)}\pxvt {\mv}\,,
\label{eq:dudxl}
\end{align}
where in the range of some sums, the notation~\((\kv)^\oplus\) means the index vector whose \(i\)th~component is~\(\max(k_i,0)\).
In deriving expansion~\eqref{eq:dudxl}, the partial derivatives with respect to~$\xv$ maintain constant \(\Xv\), \(y\) and~\(t\).
Similarly, when taking partial derivatives of any of~\(\Xv\), \(\xv\), \(y\) or~\(t\), the other variables in this group remain constant.

Take the $\ellv$th spatial derivative of the specified \pde~\eqref{eq:upde},
\begin{equation}
\partial_{\xv}^{\ellv}\left(\D tu\right)
=\partial_{\xv}^{\ellv}\left(\sum_{|\kv|=0}^{\infty} \fL_{\kv} \partial_{\xv}^{\kv} u\right)
\implies\D {t}{\big(\partial_{\xv}^{\ellv}u\big)}
=\sum_{|\kv|=0}^{\infty} \fL_{\kv} \partial_{\xv}^{\ellv+\kv} u\,.
\end{equation}
Then substitute~\eqref{eq:dudxl} for the spatial derivatives of field~\(u\) (replacing $\ellv$ with $\ellv+\kv$ where appropriate),
\begin{align}
{}&{}\sum_{|\nv|=0}^{N-|\ellv|-1}\D {t}{u^{(\nv+\ellv)}}\pxvt {\nv}
+\sum_{|\nv|=N}\sum_{\mv=(\nv-\ellv)^\oplus}^{\nv}\binom{\ellv}{\nv-\mv}\partial_{\xv}^{\mv+\ellv-\nv}\D {t}{u^{(\nv)}}\pxvt {\mv}
\nonumber\\{}&{}=
\sum_{|\kv|=0}^{\infty} \fL_{\kv} \sum_{|\nv|=0}^{N-|\ellv+\kv|-1}u^{(\nv+\ellv+\kv)}\pxvt {\nv}
\nonumber\\{}&{}
+\sum_{|\kv|=0}^{\infty} \fL_{\kv} \sum_{|\nv|=N}\sum_{\mv=(\nv-\ellv-\kv)^\oplus}^{\nv}\binom{\ellv+\kv}{\nv-\mv}\partial_{\xv}^{\mv+\ellv+\kv-\nv}u^{(\nv)}\pxvt {\mv}\,,\label{eq:sumpde}
\end{align}
This equation is exact for every \(\xv\in\chi(\Xv)\), for all stations \(\Xv\in\XX\), as the multivariate Taylor multinomial~\eqref{eq:utrtn} is exact.
However, from the last line of~\eqref{eq:sumpde}, regions of rapid spatial variation will have large `uncertain' terms involving~\(\partial_{\xv}^{\kv}{u^{(\nv)}}\) for $|\kv|>0$ and \(|\nv|=N\).

Now set $\xv=\Xv$ in equation~\eqref{eq:sumpde} so that all terms containing factors of $(\xv-\Xv)$ vanish.
Unless otherwise specified, hereafter \(u^{(\nv)}\)~denotes \(u^{(\nv)}(\Xv,y,t)\) when \(|\nv|<N\) and denotes \(u^{(\nv)}(\Xv,\Xv,y,t)\) when \(|\nv|=N\)\,.
In addition, swap the $\nv$ and~$\ellv$ multi-indices in~\eqref{eq:sumpde}.
Then, 
\begin{subequations}\label{eqs:uns}
\begin{equation}
\D {t}{u^{(\nv)}}=\sum_{|\kv|=0}^{N-|\nv|} \fL_{\kv} u^{(\nv+\kv)}+r_{\nv}\,,
\quad\text{for }|\nv|\leq N\,, \label{eq:uodes}
\end{equation}
where the \emph{remainder}  
\begin{equation}
r_{\nv}=\sum_{|\kv|\geq1}\sum_{\substack{|\ellv|=N\\\ellv<\nv+\kv}}\fL_{\kv}\binom{\kv+\nv}{\ellv}\left[\partial_{\xv}^{\kv+\nv-\ellv}u^{(\ellv)}(\Xv,\xv,y,t)\right]_{\xv=\Xv}\,.\label{eq:remain}
\end{equation}
\end{subequations}
The second term on the right-hand side of~\eqref{eq:sumpde} (when $|\kv|\geq1$) determines the remainder~\eqref{eq:remain}.
For all indices \(|\nv|=0,\ldots,N\), the \(u^{(\nv)}\) terms in equation~\eqref{eq:uodes} are evaluated at station~$\Xv$, but the remainder term~$r_{\nv}$ implicitly contains effects due to variations along the line joining  fixed station~\Xv\ to variable position~\xv\ via the integral~\eqref{eq:Nintterm}.

Now rewrite the \textsc{ode}s~\eqref{eq:uodes} of the local field derivatives~$u^{(\nv)}$ in a form corresponding to~\eqref{eq:odes}: 
\begin{equation}
\D {t}{u^{(\nv)}}=\sum_{|\kv|\leq N,\, \kv\geq\nv}\fL_{\kv-\nv} u^{(\kv)}+r_{\nv}\,,\quad\text{for }|\nv|\leq N\,,\label{eq:odesn}
\end{equation}
and with the remainder~$r_{\nv}$ playing the role of time dependent forcing. 
Equation~\eqref{eq:odesn} forms a large system of 
\begin{equation}
\tN:=\binom{N+M}{M}
\label{eq:tN}
\end{equation}
\des{} in~\UU\ since there are ${\tN}$~possible values for $M$~dimensional $\nv$ constrained by $|\nv|\leq N$ (\(\tN=6\) in~\(\RR^2\) for the \(N=2\) example of \autoref{sec:mdhx} which totals \(18\)~\ode{}s).

The system of \tN~\des{}~\eqref{eq:odesn} is an exact statement of the dynamics in the locale~\(\chi(\Xv)\) of every station~\(\Xv\).
The system~\eqref{eq:odesn} might appear closed, but it is coupled to the dynamics of neighbouring stations by the derivatives within the remainder~\eqref{eq:remain}:~\(\partial_{\xv}^{\kv}u^{(\ellv)}\) for \(|\kv|\geq1\) and \(|\ellv|=N\)\,. 
Thus system~\eqref{eq:odesn} is two faced: when viewed globally as the union over all stations~\(\Xv\in\XX\) it is a deterministic autonomous system; but when viewed locally at any one station~\(\Xv\in\XX\)\,, the inter-station coupling implicit in the remainder~\(\rvv_{\nv}\) appears as time dependent `forcing'.

Our plan is to treat the remainders as `uncertainties' and derive models where the effects of the uncertain remainders can be bounded into the precise error statement~\eqref{eq:pderemain} for the models.
Roughly, since the remainder is linear in \(\partial_{\xv}^{\kv}u^{(\ellv)}\propto \partial_{\xv}^{\kv+\ellv}u\) when $|\ellv|=N$~\footnote{The $\kv$th derivative of~\eqref{eq:Nintterm} with respect to $\xv=\Xv+\xiv$ evaluated at $|\xiv|=0$ gives $\partial_{\xv}^{\kv+\ellv}u(\Xv,y,t)=\binom{N+|\kv|}{N}\partial_{\xv}^{\kv}u(\Xv,\Xv,y,t)$\,.}
for slowly varying fields~\(u\) these high derivatives are small and so the errors due to the uncertain remainder will be small.
If the field~\(u\) has any localised internal or boundary layers, then in these locales the errors due to the uncertain remainder will be correspondingly large and so such locales should be excluded from~\XX.

\subsection{The generating function has equivalent dynamics}
\label{sec:gfhed}

From the specified \pde~\eqref{eq:upde} the previous sections dissected out a possibly combinatorially large system of \({\tN}\)~\des{}~\eqref{eq:odesn} describing localised dynamics near any station.
This section uses a multinomial generating function to pack all these \des{} back together into one equation.
The theoretical utility is that we then compactly handle the many \des{} all together.
The practical utility is that the symbology connects with and validates extant, widely~used, heuristic methodologies. 

This section establishes the following proposition.

\begin{proposition}[linear equivalence] \label{thm:lsv}
Let \(u(\xv,y,t)\) be governed by the specified linear \pde~\eqref{eq:upde}.  
Then the dynamics at all locales \(\Xv\in\XX\) are equivalently governed by the \pde
\begin{equation}
\D t\tu=\sum_{|\kv|=0}^{N}\fL_{\kv}\partial_{\xiv}^{\kv}\tu
+\tr[u],
\label{eq:updeng}
\end{equation}
for the generating function multinomial~\(\tu(\Xv,t)\) defined in~\eqref{eq:ugen}, and
for the `uncertain' coupling term~\(\tr[u]\) given by~\eqref{eq:grun}.
\end{proposition}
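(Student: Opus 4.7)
The plan is to verify Proposition~\ref{thm:lsv} by a direct coefficient-matching calculation that packs the \(\tN\) local \des~\eqref{eq:odesn} into one \pde\ via the natural generating multinomial. I anticipate the definition referenced in~\eqref{eq:ugen} is
\[
\tu(\xiv,\Xv,y,t):=\sum_{|\nv|=0}^{N} u^{(\nv)}\,\xivt{\nv}\,,
\]
where the coefficient is \(u^{(\nv)}(\Xv,y,t)\) for \(|\nv|<N\) and \(u^{(\nv)}(\Xv,\Xv,y,t)\) for \(|\nv|=N\), both already the state variables of~\eqref{eq:odesn}. The auxiliary variable~\(\xiv\) is the algebraic stand-in for \(\xv-\Xv\), chosen precisely so that \(\partial_{\xiv}^{\kv}\) mimics \(\partial_{\xv}^{\kv}\) while truncating cleanly at order \(N\).

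First I would compute \(\partial_t\tu\) coefficient-wise by substituting the established local evolution~\eqref{eq:uodes}:
\[
\D t\tu
=\sum_{|\nv|=0}^{N}\D t{u^{(\nv)}}\,\xivt{\nv}
=\sum_{|\nv|=0}^{N}\Bigl[\sum_{|\kv|\leq N-|\nv|}\fL_{\kv}\,u^{(\nv+\kv)}+r_{\nv}\Bigr]\xivt{\nv}\,.
\]
Separately, I would expand the claimed right-hand side of~\eqref{eq:updeng} using the standard multinomial identity \(\partial_{\xiv}^{\kv}\!\bigl(\xivt{\nv}\bigr)=\xiv^{\nv-\kv}/(\nv-\kv)!\) for \(\nv\geq\kv\) componentwise (and zero otherwise):
\[
\sum_{|\kv|=0}^{N}\fL_{\kv}\partial_{\xiv}^{\kv}\tu
=\sum_{|\kv|=0}^{N}\fL_{\kv}\sum_{\substack{|\nv|\leq N\\ \nv\geq\kv}}u^{(\nv)}\,\frac{\xiv^{\nv-\kv}}{(\nv-\kv)!}\,.
\]

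The key combinatorial step is to reindex via \(\mv:=\nv-\kv\) (so that \(\nv=\mv+\kv\) and the constraint \(|\nv|\leq N\) becomes \(|\kv|\leq N-|\mv|\)) and exchange the order of summation:
\[
\sum_{|\kv|=0}^{N}\fL_{\kv}\partial_{\xiv}^{\kv}\tu
=\sum_{|\mv|=0}^{N}\Bigl[\sum_{|\kv|\leq N-|\mv|}\fL_{\kv}\,u^{(\mv+\kv)}\Bigr]\xivt{\mv}\,.
\]
Matching coefficients of \(\xivt{\nv}\) between this expression and the earlier expansion of \(\partial_t\tu\) shows that the two differ by exactly \(\sum_{|\nv|=0}^{N}r_{\nv}\,\xivt{\nv}\), which I would then identify as the uncertain coupling \(\tr[u]\) of~\eqref{eq:grun}, giving~\eqref{eq:updeng}. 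Equivalence of the two formulations follows because the map \(\tu\mapsto(u^{(\nv)})_{|\nv|\leq N}\) is a bijection between multinomials of degree~\(N\) in~\(\xiv\) and \(\tN\)-tuples of functions, so each local \de\ of~\eqref{eq:odesn} is recovered from the single \pde~\eqref{eq:updeng} by extracting the coefficient of~\(\xivt{\nv}\).

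The main obstacle is not conceptual but notational: keeping multi-index ranges, the component-wise inequality \(\nv\geq\kv\), and the factorial bookkeeping straight through the reindexing \(\mv=\nv-\kv\) and the accompanying swap of summation order. Once this is set up correctly the identity is automatic: no spectral, differentiability, or convergence hypothesis is invoked beyond what already went into deriving~\eqref{eq:odesn} from the specified \pde~\eqref{eq:upde} via Lagrange's Remainder Theorem. The proposition is therefore essentially a compact multinomial-algebra restatement of the already-derived \(\tN\)-dimensional local system, with the uncertainty bundled into \(\tr[u]\).
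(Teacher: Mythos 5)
Your computation of the remainder is exactly the paper's: both compute $\partial_t\tu$ by substituting the local evolution~\eqref{eq:uodes} into the coefficient-wise time derivative, use the identity $\partial_{\xiv}^{\kv}\tu=\sum_{|\nv|\le N-|\kv|}\xivt{\nv}u^{(\nv+\kv)}$ (the paper's~\eqref{eq:dtudxi}, which you derive from the multi-index derivative rule and a reindex $\mv=\nv-\kv$), swap the two sums, and identify the leftover $\sum_{|\nv|\le N}r_\nv\xivt\nv$ with $\tr[u]$. That part of your proposal and the paper's derivation~\eqref{eq:dttu}--\eqref{eq:grun} are the same argument in slightly different presentation.

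Where you genuinely diverge is in discharging the word ``equivalently.'' You close the loop by observing that $\tu\mapsto(u^{(\nv)})_{|\nv|\le N}$ is a bijection between $\UU_N$ and $\tN$-tuples, so the single multinomial \pde~\eqref{eq:updeng} is literally the same information as the $\tN$ local \des~\eqref{eq:odesn}, coefficient by coefficient; those in turn were derived exactly from~\eqref{eq:upde}. That is a clean, self-contained argument and does establish the proposition. The paper instead introduces the operator $\cG$ of~\eqref{eq:Gn}, shows $\cG$ commutes with $\partial_t$, uses $\cG u=\tu$ and the Taylor-expansion relation~\eqref{eq:Gnshft} to argue that~\eqref{eq:updeng} reproduces~\eqref{eq:upde} at $\xv=\Xv+\xiv$ up to a difference $\Ord{|\xiv|^{N+1}}$, and notes exact identity at $\xiv=\ov$. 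This is more machinery than the bijection needs, but the paper is deliberately constructing $\cG$ because the identification of $\partial_{\xiv}$ with $\partial_{\xv}$ (and the precise sense in which they differ) is reused when passing from the multinomial centre-subspace \pde~\eqref{eq:ecnddt} back to the physical-space model~\eqref{eq:dcdta}. So your argument is correct and more economical for this proposition in isolation, while the paper's route front-loads infrastructure that subsequent sections rely on; you should be aware that you will still need some version of that $\partial_{\xiv}\!\leftrightarrow\!\partial_{\xv}$ correspondence later on.
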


For every station \(\Xv\in\XX\) and time~\(t\) consider the field~\(u\) in terms of a local Taylor multinomial~\eqref{eq:utrtn} about the cross-section \(\xv=\Xv\)\,.
In terms of the indeterminate \(\xiv\in\RR^M\), define the generating multinomial
\begin{equation}
\tu(\Xv,t):=\sum_{|\nv|=0}^{N-1}\xivt{\nv} u^{(\nv)}(\Xv,y,t)
+\sum_{|\nv|=N}\xivt{\nv}u^{(\nv)}(\Xv,\Xv,y,t),
\label{eq:ugen}
\end{equation}
where this generating multinomial~\(\tilde u\),  through its range denoted by~\(\UU_N\), is implicitly a function of the indeterminate~\xiv, and the cross-sectional variable~\(y\).
This generating multinomial \(\tu:\XX\times\RR\to\UU_N\) for the vector space
\begin{equation}
\UU_N:=\UU\otimes\GG_N
\quad\text{where }
\GG_N:=\{\text{multinomials in \xiv\ of degree}\leq N\},
\label{eq:UUN}
\end{equation}
and where $\otimes$ represents the vector space tensor product. 
In the heat exchanger example of \autoref{sec:sse}, $(u^{00},u^{10},\ldots,u^{02})\in\RR^3\otimes\RR^6=\RR^{18}$ whereas the equivalent generating multinomial is \(\tilde{u}\in\UU_2\) where \(\UU_2= \RR^3\otimes\GG_2\) with $\xiv=(\xi_1,\xi_2)$.
Importantly, the \pde~\eqref{eq:updeng} for~\(\tilde u\) is symbolically the same as the specified \pde~\eqref{eq:upde} with \(u\leftrightarrow\tilde u\), \(\xv\leftrightarrow\xiv\), and the addition of a remainder term~\(\tr[u]\).
But the derivatives~\(\partial_{\xiv}\) in the \pde~\eqref{eq:updeng} are considerably simpler than the derivatives~\(\partial_{\xv}\) in the \pde~\eqref{eq:upde} because in \pde~\eqref{eq:updeng} the derivatives act only on~\(\GG_N\), the space of multinomials in~\xiv\ of degree at most~\(N\).
Simpler because, although derivatives are often confoundingly unbounded in mathematical analysis, here the derivatives~\(\partial_{\xiv}\) are bounded in~\(\GG_N\).

Our first task is to show that the generating multinomial~\eqref{eq:ugen} satisfies the \pde~\eqref{eq:updeng} and to derive an expression for the coupling term~$\tr[u]$.
Our second task is to prove that systematic modelling of the specified \pde~\eqref{eq:upde}, via \pde~\eqref{eq:updeng}, is equivalent to well-known heuristic procedures expressed in terms of the generating multinomial, to some error which is now determined.

To find the remainder~\(\tr[u]\), first observe that \kv~derivatives with respect to the indeterminate~\xiv\ of the generating multinomial~\eqref{eq:ugen} lead to the identity
\begin{equation}
\partial_{\xiv}^{\kv}\tu
=\sum_{|\nv|=0}^{N-|\kv|}\xivt{\nv}u^{(\nv+\kv)}.
\label{eq:dtudxi}
\end{equation}
Second, take the time derivative of~\eqref{eq:ugen} and replace \(\D t{u^{(\nv)}}\) using~\eqref{eq:uodes}:
\begin{align}
\D{t}{\tu}
={}&{}\sum_{|\nv|=0}^{N}\xivt{\nv}
\left[\sum_{|\kv|=0}^{N-|\nv|}\fL_{\kv}u^{(\nv+\kv)}\right]
+\sum_{|\nv|=0}^N\xivt{\nv}r_{\nv}
\nonumber\\
={}&{}\sum_{|\kv|=0}^{N}\fL_{\kv}
\left[\sum_{|\nv|=0}^{N-|\kv|}\xivt{\nv}u^{(\nv+\kv)}\right]
+\sum_{|\nv|=0}^N\xivt{\nv}r_{\nv}
\nonumber\\
={}&{}\sum_{|\kv|=0}^{N}\fL_{\kv}\partial_{\xiv}^{\kv}\tu+\sum_{|\nv|=0}^N\xivt{\nv}r_{\nv}\,.
\label{eq:dttu}
\end{align}
This is precisely \pde~\eqref{eq:updeng} of Proposition~\ref{thm:lsv} with coupling remainder 
\begin{align}
\tr[u]&=\sum_{|\nv|=0}^N\xivt{\nv}r_{\nv}
\nonumber\\
&=\sum_{|\kv|\geq1}\fL_{\kv}\sum_{|\nv|=0}^N\xivt{\nv}\sum_{\substack{|\ellv|=N\\\ellv<\nv+\kv}}
\binom{\kv+\nv}{\ellv}\left[
\partial_{\xv}^{\kv+\nv-\ellv}u^{(\ellv)}(\Xv,\xv,y,t)
\right]_{\xv=\Xv}
\label{eq:grun}
\end{align}
upon using expression~\eqref{eq:remain} for the remainders~$r_{\nv}$.
Expression~\eqref{eq:grun} gives the remainder term appearing in Proposition~\ref{thm:lsv}.

We now turn to the second key task which is to relate fields in physical space with their corresponding field in the generating multinomial space~\(\UU_N\).
Define the operator
\begin{equation}
\cG:=\left[\sum_{|\nv|=0}^N\xivt \nv\partial_{\xv}^{\nv}\right]_{\xv=\Xv}\,,
\label{eq:Gn}
\end{equation}
where these subscripted brackets denote evaluation.
This operator is denoted by~\cG\ to signify it determines the generating multinomial corresponding to a given field. For example, it is straightforward to deduce
\begin{align}&
\cG\left[(\xv-\Xv)^{\nv}\right]=\xiv^{\nv}\quad\text{when }|\nv|\leq N\,, \quad\text{and}
\nonumber\\&
\cG\left[(\xv-\Xv)^{\nv}u^{(\nv)}(\Xv,\xv,y,t)\right] =\xiv^{\nv}u^{(\nv)}(\Xv,\Xv,y,t) \quad\text{when }|\nv|=N\,.
\label{eq:Gop}
\end{align}
Thus $\cG$ operating on the Taylor multinomial expansion~\eqref{eq:utrtn} gives
\begin{equation}
\cG u(\xv,y,t)=\tu(\Xv,t)\in\UU_N=\UU\otimes\GG_N\,.\label{eq:Gopu}
\end{equation}
But to use operator~\(\cG\) on some general function~$f(\xv)\in C^{N+1}$, observe that the Taylor expansion of~\(f(\Xv+\xiv)\) about $\xv=\Xv$  gives
\begin{equation}
\big[f(\xv)\big]_{\xv=\Xv+\xiv}=f(\Xv+\xiv)=\sum_{|\nv|=0}^{N}\xivt \nv\partial_{\xv}^{\nv} f(\Xv)+R_N(f)
=\cG f(\xv)+\Ord{|\xiv|^{N+1}},\label{eq:Gnshft}
\end{equation}
where $R_N(f)$ is the appropriate Lagrange remainder term.
Thus, $\cG f(\xv)$ evaluates~$f(\Xv+\xiv)$ to a difference~$\Ord{|\xiv|^{N+1}}$.

Terms of the form ``\Ord{|\xiv|^{N+1}}'' are not to be viewed as errors, instead they represent differences.
Such differences arise through terms that are of no interest or relevance in this context.
The reason is that we are only interested in operations and identities in the multinomial space~\(\GG_N\) of degree~\(N\) multinomials.
For expressions such as \(f(\Xv+\xiv)\) that typically are off~\(\GG_N\), it is only the projection onto~\(\GG_N\) that is relevant as we only address relations of the components in~\(\GG_N\).
A term ``\Ord{|\xiv|^{N+1}}'' reflects such a projection.

\paragraph{Establish Proposition~\ref{thm:lsv}}
The equivalence of indeterminate~\(\xiv\) and space~\(\xv\), to within the quantified difference, is the key to the equivalence between our rigorous approach to modelling and the well established heuristics of slow scaling of the space variables.
From its definition~\eqref{eq:Gn}, the operator~$\cG$ commutes with $\D t{}$ and thus, from~\eqref{eq:Gopu}, 
\begin{equation}
\cG\D t{u(\xv,y,t)}=\D t{\tu(\Xv,t)}\,.
\end{equation}
However, from~\eqref{eq:Gnshft} we also know that $\cG\D t{u(\xv,y,t)}$ is the Taylor expansion of $\D t{u(\xv,y,t)}$ at $\xv=\Xv+\xiv$ with a difference~$\Ord{|\xiv|^{N+1}}$.
Thus,  to a difference~\Ord{|\xiv|^{N+1}}, the \pde~\eqref{eq:updeng} of the generating multinomial~$\tu(\Xv,t)$ is equivalent  to the \pde~\eqref{eq:upde} of original field~$u(\xv,y,t)$ when evaluated at $\xv=\Xv+\xiv$\,.
At $\xv=\Xv$ (i.e., $\xiv=\ov$) the \pde~\eqref{eq:updeng}  of~$\tu(\Xv,t)$ and the \pde~\eqref{eq:upde} of~$u(\xv,y,t)$ are identical. 
This completes the proof of Proposition~\ref{thm:lsv}.

\subsection{Model the local autonomous dynamics}
\label{sec:mlas}

Having transformed the physical \pde~\eqref{eq:upde} to the equivalent generating function form~\eqref{eq:updeng} we now analyse this later form in order to establish the Slowly Varying \pde\ Proposition~\ref{thm:svpde}.
The generating function form~\eqref{eq:updeng} is symbolically the same as the physical \pde~\eqref{eq:upde} but has two crucial differences: 
firstly, in~\(\GG_N\) the derivatives~\(\partial_{\xiv}^{\nv}\) are bounded and finite-D, whereas in~\XX\ the derivatives~\(\partial_{\xv}^{\nv}\) are unbounded and `infinite'-D;
and secondly, it is the presence of the `uncertain forcing' term~\(\tr[u]\) that couples the local, approximately finite-D, dynamics to the `infinite'-D dynamics over the whole domain~\XX.

To analyse the `uncertainly forced' system~\eqref{eq:updeng} we must first understand the autonomous local system 
\begin{equation}
\D t{\tu}=\tL \tu  \quad\text{with}\quad \tL :=\sum_{|\kv|=0}^{N}\fL_{\kv}\partial_{\xiv}^{\kv} \quad(\tL:\UU_N\to\UU_N).
\label{eq:odesna}
\end{equation}
The invariant subspaces of~\tL\ in~\(\UU_N\) are a key part of our understanding of the autonomous system~\eqref{eq:odesna}.
This subsection establishes Proposition~\ref{thm:ecnddt} that solutions on the centre subspace of~\tL\ satisfy the local, linear, \(N\)th~order, \pde~\eqref{eq:ecnddt}.

Operator~$\tL$ is effectively block upper-triangular. 
The upper triangular nature is due to the derivatives in the definition~\eqref{eq:odesna} of~$\tL$. 
Decompose operator \(\tL=\fL_{\ov}+\tK\), the sum of its `diagonal' part~\(\fL_{\ov}\) and its `off-diagonal' part \(\tK:=\sum_{|\kv|=1}^{N}\fL_{\kv}\partial_{\xiv}^{\kv}\).
Terms of~$\tK\tu$ in~$\xiv^{\nv}$  depend only upon terms of~$\tu$ in~$\xiv^{\kv}$ for \(|\kv|>|\nv|\) ($|\kv|\leq N)$, and so \(\tK\) is zero for all \(|\kv|\leq|\nv|\).
The diagonal block~$\fL_{\ov}$ of the `block upper-triangular' operator~$\tL$ ensures that the spectrum of operator~$\tL$ is that of~$\fL_{\ov}$, but repeated \tN~times (once for each power of~\xiv\ in~\(\GG_N\)), for the combinatorially large~\tN\ defined by~\eqref{eq:tN}.

\begin{assumption} \label{ass:spec}
We assume the following for the primary case of purely centre-stable dynamics.
\begin{enumerate}
\item The Hilbert space~\(\UU\) is the direct sum of two closed \(\fL_{\ov}\)-invariant subspaces, \(\EE_c^0\) and~\(\EE_s^0\), and the corresponding restrictions of~\(\fL_{\ov}\) generate strongly continuous semigroups \cite[e.g.]{Gallay93, Aulbach96}.
\item The operator~\(\fL_{\ov}\) has a discrete spectrum of eigenvalues~\(\lambda_1,\lambda_2,\ldots\) (repeated according to multiplicity) with corresponding linearly independent (generalised) eigenvectors~\(v_1^{\ov},v_2^{\ov},\ldots\) that are complete (\(\UU=\Span\{v_1^{\ov},v_2^{\ov},\ldots\}\)). 
\item The first \(m\)~eigenvalues \(\lambda_1,\ldots,\lambda_m\) of~\(\fL_{\ov}\) all have real part satisfying \(|\Re\lambda_j|\leq\alpha\) and hence the \(m\)-dimensional \emph{centre subspace} \(\EE_c^0=\Span\{v_1^{\ov},\ldots,v_m^{\ov}\}\) \cite[Chap.~4, e.g.]{Chicone2006}.
\item All other eigenvalues~\(\lambda_{m+1},\lambda_{m+2},\ldots\) have real part negative and well separated from the centre eigenvalues, namely \(\Re\lambda_j\leq-\beta<-N\alpha\) for \(j=m+1,m+2,\ldots\)\,, and so the stable subspace \(\EE_s^0=\Span\{v_{m+1}^{\ov},v_{m+2}^{\ov},\ldots\}\).
\end{enumerate}
\end{assumption}

Although we almost entirely address the case when the Hilbert space~$\UU$ decomposes into only a centre and a stable subspace, much of the following derivation and discussion applies to other cases that may be of interest in other circumstances.  
One may be interested in a centre subspace among both stable and unstable modes, or in a \emph{slow subspace} corresponding to pure zero eigenvalues (as in the random walker\slash  heat exchanger example of \autoref{sec:mdhx}), or in some other `normal mode' \emph{subcentre subspace} \cite[e.g.]{Lamarque2011}, or in the \emph{centre-unstable subspace}, and so on.
We primarily focus on the centre subspace among otherwise decaying modes as then the centre subspace contains the long term emergent dynamics from quite general initial conditions (\cite{Robinson96} called it asymptotically complete).

\begin{definition} \label{def:baseig}
Recall Assumption~\ref{ass:spec} identifies a subset of~$m$ eigenvectors of~$\fL_\ov$ which span the centre subspace~$\EE^0_c\ (\subset\UU)$.
\begin{itemize}
\item With these eigenvectors define 
\begin{equation*}
V^\ov
:=\begin{bmatrix} v_1^\ov&v_2^\ov&\cdots&v_m^\ov \end{bmatrix}
\in\UU^{1\times m}.
\end{equation*}

\item 
Since the centre subspace is an invariant space of~$\fL_\ov$, define \(A_{\ov}\in\RR^{m\times m}\) to be such that \(\fL_\ov V^\ov =V^\ov A_{\ov}\) (often \(A_{\ov}\) will be in Jordan form, but it is not necessarily so).

\item
Use \(\langle\cdot,\cdot\rangle\) to also denote the inner product on the Hilbert space~$\UU$, $\langle\cdot,\cdot\rangle:\UU\times\UU\to\RR$\,.

Interpret this inner product when acting on two matrices\slash vectors with elements in  \(\UU\) as the matrix\slash vector of the corresponding elementwise inner products.
For example, for $Z^{\ov}, V^{\ov}\in \UU^{1\times m}$\,, $\langle Z^{\ov},V^{\ov}\rangle\in \RR^{m\times m}$.

\item
Also define \(Z^{\ov}\) to have \(m\)~columns, linearly independent, which both span the corresponding centre subspace of the adjoint~\(\Tr{\fL_\ov}\) (in the chosen inner product), and also are normalised such that \(\langle Z^\ov,V^\ov\rangle=I_m\)\,. 

\end{itemize}
\end{definition}

Developing from this eigen-decomposition of~\(\cL_\ov\), we require \(m\tN\)~basis vectors of~$\tL$ to span the centre subspace of~\tL, denoted~$\EE^N_c$, in the generating multinomial space~$\UU_N$. 
We now establish the existence of {suitable} basis vectors of~$\tL$ from those of~$\fL_\ov$ by considering suitable multinomials in~$\xiv$.
These basis vectors are typically generalised eigenvectors of~\tL\ in the multinomial space~\(\UU_N\)---they are derived from, but are very different to, the eigenvectors of~\(\fL_\ov\)

\paragraph{Recursively define generalised eigenvectors}
After solving the basic eigenproblem for~\( A_{\ov}\), \(V^{\ov}\) and~\(Z^{\ov}\), Definition~\ref{def:baseig}, now recursively solve the following sequence of problems for \( A_{\nv}\in\RR^{m\times m}\)~and~\(V^{\nv}\in\UU^{1\times m}\), \(0<|{\nv}|\leq N\)\,, 
\begin{subequations}\label{eqs:toep}%
\begin{eqnarray}&&
 A_{\nv}:=
 \sum_{0<|\kv|, \kv\leq \nv}\big\langle Z^{\ov},\fL_{\kv} V^{\nv-\kv}\big\rangle,
\label{eq:toeplam}
\\&&
\fL_{\ov}V^{\nv}-V^{\nv} A_{\ov}=
-\sum_{0<|\kv|, \kv\leq \nv}\fL_{\kv}V^{\nv-\kv}
+ \sum_{0<|\kv|, \kv\leq \nv}V^{\nv-\kv} A_{\kv}\,,
\label{eq:topevn}
\\&&\big\langle Z^{\ov},V^{\nv}\big\rangle=0_m\,.
\label{eq:toeport}
\end{eqnarray}%
\end{subequations}
In applications, the \(m\)~columns of each of these~\(V^{\nv}\) contain information about the interactions between plate-wise gradients of the field~\(u\), as felt through the mechanisms encoded in the \(\fL_{\kv}\) for $|\kv|>0$ and the cross-sectional out-of-equilibrium dynamics encoded in~\(\fL_{\ov}\).

\begin{lemma}
  The recursive equations~\eqref{eqs:toep} are solvable for~\(A_{\nv}\) and \(V^\nv\) for all \(0<|{\nv}|\leq N\)\,.
\end{lemma}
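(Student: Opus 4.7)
The plan is to proceed by strong induction on $|\nv|$. The base case $|\nv|=0$ is exactly Definition~\ref{def:baseig}, which fixes $A_{\ov}$, $V^{\ov}$, and $Z^{\ov}$ with $\langle Z^{\ov},V^{\ov}\rangle=I_m$. For the inductive step, assume $A_{\mv}$ and $V^{\mv}$ satisfying~\eqref{eqs:toep} have been constructed for all multi-indices with $0\leq|\mv|<|\nv|$. Then $A_{\nv}$ is \emph{defined} outright by~\eqref{eq:toeplam}, since every $V^{\nv-\kv}$ appearing on its right-hand side has $|\nv-\kv|<|\nv|$ and is known. What remains is to solve the Sylvester-type operator equation~\eqref{eq:topevn} for $V^{\nv}\in\UU^{1\times m}$, subject to the orthogonality constraint~\eqref{eq:toeport}.

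The main obstacle is solvability of~\eqref{eq:topevn}: the operator $T\colon V\mapsto \fL_{\ov}V-VA_{\ov}$ on $\UU^{1\times m}$ has a nontrivial kernel (it contains $V^{\ov}$, since $\fL_{\ov}V^{\ov}=V^{\ov}A_{\ov}$), so~\eqref{eq:topevn} has solutions only if its right-hand side~$R_{\nv}$ lies in the range of~$T$. Under Assumption~\ref{ass:spec}, the spectral gap and the semigroup structure on $\EE_c^0\oplus\EE_s^0$ ensure that $T$ has closed range and that the Fredholm alternative applies: the solvability condition is that $R_{\nv}$ is annihilated by the adjoint kernel, which is spanned by the columns of~$Z^{\ov}$ (the centre-subspace basis of $\Tr{\fL_{\ov}}$). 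Concretely, solvability requires $\langle Z^{\ov},R_{\nv}\rangle=0$ as an $m\times m$ matrix identity.

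The key computation is then to verify this solvability condition by direct evaluation: apply $\langle Z^{\ov},\cdot\rangle$ to the right-hand side of~\eqref{eq:topevn} to get
\begin{equation*}
\langle Z^{\ov},R_{\nv}\rangle
=-\sum_{0<|\kv|,\,\kv\leq\nv}\langle Z^{\ov},\fL_{\kv}V^{\nv-\kv}\rangle
+\sum_{0<|\kv|,\,\kv\leq\nv}\langle Z^{\ov},V^{\nv-\kv}\rangle A_{\kv}.
\end{equation*}
The first sum equals $A_{\nv}$ by the defining formula~\eqref{eq:toeplam}. In the second sum, the inductive orthogonality~\eqref{eq:toeport} forces $\langle Z^{\ov},V^{\nv-\kv}\rangle=0$ for every $0<|\kv|<|\nv|$, so only the term $\kv=\nv$ survives; that term contributes $\langle Z^{\ov},V^{\ov}\rangle A_{\nv}=A_{\nv}$. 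Hence $\langle Z^{\ov},R_{\nv}\rangle=-A_{\nv}+A_{\nv}=0$, so $A_{\nv}$ has been chosen precisely to make~\eqref{eq:topevn} solvable.

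Finally, the solution of~\eqref{eq:topevn} is determined only up to an element of $\ker T$, which is the $m$-dimensional centre subspace $\Span\{v^{\ov}_1,\ldots,v^{\ov}_m\}$ lifted to $\UU^{1\times m}$. Fixing a particular right-inverse of $T$ gives a particular solution $V^{\nv}_{\text{part}}$, and the general solution is $V^{\nv}=V^{\nv}_{\text{part}}+V^{\ov}C$ for some $C\in\RR^{m\times m}$. The normalization condition~\eqref{eq:toeport} then yields $0=\langle Z^{\ov},V^{\nv}_{\text{part}}\rangle+C$, uniquely selecting $C=-\langle Z^{\ov},V^{\nv}_{\text{part}}\rangle$ and hence a unique $V^{\nv}$. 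By induction this procedure delivers $A_{\nv}$ and $V^{\nv}$ for every $\nv$ with $0<|\nv|\leq N$, completing the proof.
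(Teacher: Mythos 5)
Your proof is correct and follows essentially the same route as the paper: you verify the Fredholm solvability condition $\langle Z^\ov,R_\nv\rangle=0_m$ by the same cancellation $-A_\nv+A_\nv=0_m$ (using the defining formula for $A_\nv$ and the inductive orthogonality), and then use the $m$-parameter freedom in the kernel of $V\mapsto\fL_\ov V-VA_\ov$ to impose~\eqref{eq:toeport}. Your explicit induction and separation of ``check solvability first, then pin down the solution by normalisation'' is a modest clean-up of the paper's presentation, which checks $\langle Z^\ov,\cdot\rangle$ of both sides of~\eqref{eq:topevn} in one pass.
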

\begin{proof} 
Consider \(\langle Z^\ov,\eqref{eq:topevn}\rangle\).
The left-hand side, using the choice~\eqref{eq:toeplam} and the orthogonality~\eqref{eq:toeport}, becomes
\begin{eqnarray*}&&
\big\langle Z^\ov,\fL_{\ov}V^\nv\big\rangle-\big\langle Z^\ov,V^\nv A_{\ov}\big\rangle
=\big\langle \Tr\fL_{\ov}Z^\ov,V^\nv\big\rangle-\big\langle Z^\ov,V^\nv\big\rangle A_{\ov}
\\&&{}
=\big\langle Z^\ov\Tr A_{\ov},V^\nv\big\rangle-0_m A_{\ov}
= A_{\ov}\big\langle Z^\ov,V^\nv\big\rangle
= A_{\ov}0_m
=0_m\,.
\end{eqnarray*}
Whereas the right-hand side of~\(\langle Z^\ov,\eqref{eq:topevn}\rangle\), also using the orthogonality~\eqref{eq:toeport}, becomes
\begin{eqnarray*}&&
-\sum_{0<|\kv|, \kv\leq \nv}\big\langle Z^\ov,\fL_\kv V^{\nv-\kv}\big\rangle
+\sum_{0<|\kv|<|\nv|, \kv\leq \nv}\big\langle Z^\ov,V^{\nv-\kv} A_{\kv}\big\rangle
+\big\langle Z^\ov,V^\ov A_{\nv}\big\rangle
\\&&{}
=-\sum_{0<|\kv|, \kv\leq \nv}\big\langle Z^\ov,\fL_\kv V^{\nv-\kv}\big\rangle
+\sum_{0<|\kv|<|\nv|, \kv\leq \nv}\big\langle Z^\ov,V^{\nv-\kv}\big\rangle A_{\kv}
+\big\langle Z^\ov,V^\ov\big\rangle A_{\nv}
\\&&{}
=-\sum_{0<|\kv|, \kv\leq \nv}\big\langle Z^\ov,\fL_\kv V^{\nv-\kv}\big\rangle
+\sum_{0<|\kv|<|\nv|, \kv\leq \nv}0_m A_{\kv}
+I_m A_{\nv}
\\&&{}
=-\sum_{0<|\kv|, \kv\leq \nv}\big\langle Z^\ov,\fL_\kv V^{\nv-\kv}\big\rangle
+ A_{\nv}
=0_m
\end{eqnarray*}
by the choice~\eqref{eq:toeplam}.
Consequently the right-hand side of~\eqref{eq:topevn} is in the range of the left-hand side.
Also, the left-hand side operator has a null space spanned by the \(m\)~columns of~\(V^\ov\) and so there is enough freedom to impose the normality condition~\eqref{eq:toeport} at every step. 
\end{proof}

\begin{lemma}\label{lem:curlyV}
For the homogeneous system~\eqref{eq:odesna} in the multinomial space~\(\UU_N\), a basis for the centre subspace of~\tL\ is the collective columns of 
\begin{equation}
\tV^\nv:=\sum_{\kv=\ov}^\nv V^{\nv-\kv}\xivt{\kv}
\,,
\quad 0\leq|\nv|\leq N\,.
\label{eq:genevec}
\end{equation}
Let \tV\ denote the collection of columns of~\(\tV^\nv\) over all~\nv, \(0\leq|\nv|\leq N\)\,, ordered within the partial ordering of~\(|\nv|\).
Denote the centre subspace of~\tL, spanned by columns of~\tV, by~\(\EE_c^N\).
\end{lemma}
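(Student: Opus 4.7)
The plan is to verify three facts: (i)~the span of the columns of the $\tV^\nv$ is $\tL$-invariant; (ii)~in this subspace the matrix of~$\tL$ is block upper-triangular with every diagonal block equal to~$A_\ov$, so the restricted spectrum is $\sigma(A_\ov)=\{\lambda_1,\dots,\lambda_m\}$ with each eigenvalue of multiplicity~$\tN$; and (iii)~the $m\tN$ columns are linearly independent. Since the paragraph preceding Assumption~\ref{ass:spec} already notes that $\sigma(\tL)=\sigma(\fL_\ov)$ repeated $\tN$~times, the centre subspace $\EE_c^N$ has dimension exactly~$m\tN$, so (i)--(iii) together identify $\Span\tV$ with~$\EE_c^N$.

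First I would compute $\tL\tV^\nv$ directly. Using $\partial_\xiv^\kv\xivt{\jv}=\xivt{\jv-\kv}$ when $\kv\leq\jv$ (componentwise) and zero otherwise, and re-indexing by $\pv:=\jv-\kv$, the sum $\tL\tV^\nv$ reorganises as
\[
\tL\tV^\nv=\sum_{\ov\leq\pv\leq\nv}\Bigl(\sum_{\ov\leq\kv\leq\nv-\pv}\fL_\kv V^{\nv-\pv-\kv}\Bigr)\xivt{\pv}.
\]
The inner sum, with $\mv:=\nv-\pv$, is $\sum_{\ov\leq\kv\leq\mv}\fL_\kv V^{\mv-\kv}$. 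Rearranging the defining equation~\eqref{eq:topevn} (absorbing the $\kv=\ov$ contributions $\fL_\ov V^\mv$ and $V^\mv A_\ov$ into both sides) yields the key identity
\[
\sum_{\ov\leq\kv\leq\mv}\fL_\kv V^{\mv-\kv}=\sum_{\ov\leq\kv\leq\mv}V^{\mv-\kv}A_\kv.
\]
Substituting and re-collecting powers of~$\xiv$ then gives the clean recursion
\[
\tL\tV^\nv=\sum_{\ov\leq\kv\leq\nv}\tV^{\nv-\kv}A_\kv.
\]
This simultaneously shows that $\Span\tV$ is $\tL$-invariant and, since the $\kv=\ov$ term contributes $\tV^\nv A_\ov$ while all other terms involve~$\tV^{\nv'}$ with $|\nv'|<|\nv|$, that in the ordering of the basis by non-decreasing~$|\nv|$ the matrix of $\tL|_{\Span\tV}$ is block upper-triangular with every diagonal block equal to~$A_\ov$. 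Its spectrum is therefore $\sigma(A_\ov)$ with each eigenvalue of algebraic multiplicity~$\tN$, matching the centre spectrum of~$\tL$.

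Linear independence follows by a descending induction on multinomial degree. Suppose $\sum_{|\nv|\leq N}\tV^\nv c_\nv=0$ in $\UU_N$ for some column vectors $c_\nv\in\RR^m$. The monomial $\xivt{\mv}$ with $|\mv|=N$ arises from $\tV^{\nv'}$ only through the $\kv=\mv$ term with $\mv\leq\nv'$ and $|\mv|=|\nv'|=N$, forcing $\nv'=\mv$ with coefficient $V^\ov$. Hence the $\xivt{\mv}$ component of the linear combination reads $V^\ov c_\mv=0$; by linear independence of the columns of~$V^\ov$ (Definition~\ref{def:baseig}), $c_\mv=0$ for every $|\mv|=N$. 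Stripping one degree at a time in the same manner forces every $c_\nv$ to vanish.

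The part I expect to be fiddly is the multi-index bookkeeping in the first step---swapping the $\kv$ and $\pv$ sums while respecting $\ov\leq\kv,\pv,\kv+\pv\leq\nv$, and recognising the resulting double sum as $\sum_\kv\tV^{\nv-\kv}A_\kv$. Once the invariance identity is in hand, the triangular structure, the spectral identification, and the dimension count are all immediate.
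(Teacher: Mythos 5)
Your proof is correct and takes essentially the same route as the paper's: compute $\tL\tV^\nv$, use the rearranged recursion~\eqref{eq:topevn} to convert $\sum_{\kv}\fL_\kv V^{\mv-\kv}$ into $\sum_{\kv}V^{\mv-\kv}A_\kv$, re-collect to obtain the invariance identity~\eqref{eq:tlvva}, read off the block upper-triangular structure with diagonal blocks~$A_\ov$, and close with linear independence and a dimension count. Your linear-independence argument (descending induction on $|\nv|$, extracting the top-degree coefficient $V^\ov c_\mv$) is a more explicit version of the paper's appeal to linear independence of $\{\xivt\nv\}$ and the columns of $V^\ov$; otherwise the two proofs coincide in substance.
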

\begin{proof} 
First prove the space spanned by~\eqref{eq:genevec} is invariant: for all~\nv, \(0\leq|\nv|\leq N\)\,, consider
\begin{eqnarray}
\tL\tV^\nv
&=&\sum_{|\ellv|=0}^N\fL_\ellv\partial_\xiv^\ellv \tV^\nv
=\sum_{|\ellv|=0}^N\fL_\ellv\partial_\xiv^\ellv \sum_{\kv=\ov}^\nv V^{\nv-\kv}\xivt{\kv}
\nonumber\\
&=& \sum_{\kv=\ov}^\nv \sum_{\ellv=\ov}^\kv \fL_\ellv  V^{\nv-\kv}\xivt{(\kv-\ellv)}
= \sum_{\kv=\ov}^\nv \sum_{\ellv=\ov}^\kv \fL_\ellv  V^{\nv-\kv}\xivt{(\kv-\ellv)}
\nonumber\\
&=& \sum_{\kv=\ov}^\nv \sum_{\ellv=\ov}^\kv \fL_{\kv-\ellv}  V^{\nv-\kv}\xivt{\ellv}
= \sum_{\ellv=\ov}^\nv \xivt{\ellv}
\sum_{\kv=\ellv}^\nv \fL_{\kv-\ellv}  V^{\nv-\kv}
\nonumber\\
&=& \sum_{\ellv=\ov}^\nv \xivt{\ellv} \sum_{\kv=\ov}^{\nv-\ellv} \fL_{\kv}  V^{\nv-\ellv-\kv}
= \sum_{\ellv=\ov}^\nv \xivt{\ellv} \sum_{\kv=\ov}^{\nv-\ellv} V^{\nv-\ellv-\kv}A_{\kv}
\nonumber\\
&= &\sum_{\kv=\ov}^{\nv} \Bigg[\sum_{\ellv=\ov}^{\nv-\ellv} \xivt{\ellv}  V^{\nv-\kv-\ellv}\Bigg]A_{\kv}
= \sum_{\kv=\ov}^{\nv} \tV^{\nv-\kv}A_{\kv} \,,
\label{eq:tlvva}
\end{eqnarray}
which is a linear combination of~\(\{\tV^\nv\}\), the columns of~\tV.
Since this identity holds for all~\nv, there exists an \(m\tN\times m\tN\) matrix~\bA, composed of~\(\tN\times\tN\)  blocks of \((m\times m)\) \(A_\nv\) and~\(0_m\), such that \(\tL\tV=\tV\bA\)\,. 
(The random walker\slash heat exchanger matrix~\bA\ in~\eqref{eq:A} furnishes an example of such a block upper-triangular matrix~\bA---in a case with \(m=1\)\,.)

Second, from the identity~\eqref{eq:tlvva}, \(\tL\tV^\nv=\tV^\nv A_\ov+(\text{lower order }\tV^\ellv)\), where `lower order' means \(|\ellv|<|\nv|\).   
Consequently, matrix~\bA\ is upper-triangular.
Further, the \tN~diagonal blocks of~\bA\ are~\(A_\ov\). 
Thus the eigenvalues corresponding to the eigenspace spanned by~\tV\ are the centre eigenvalues in~\(A_\ov\) repeated \(\tN\)~times.
These fully account for the centre eigenvalues of~\tL\ (counted according to multiplicity).

Third, the linear independence of both~\(\{\xivt\nv\}\)\ in~\(\GG_N\) and the columns of~\(V^\ov\) in~\UU\ imply, via definition~\eqref{eq:genevec}, that the columns of~\tV\ are linearly independent in \(\UU_N=\UU\otimes\GG_N\)\,.
\end{proof}

Our aim is to show the evolution on the centre subspace has a physically appealing, compact, representation directly corresponding to the physical space \pde~\eqref{eq:dcdta}.
Further, the representation sets up connections to other established methodologies. 
The next proposition is this desired result.

\begin{proposition} \label{thm:ecnddt}
Let \(\{U_\nv\in\RR^m : \nv\in\NN_0^M,\ 0\leq|\nv|\leq N\}\) parametrise the centre subspace~\(\EE_c^N\) via \(\tu=\sum_{|\nv|=0}^N\tV^\nv U_\nv\)\,.
For these parameters, define the generating multinomial \(\tU(\xiv,t):=\sum_{|\nv|=0}^N \xivt{\nv}U_\nv(t)\). 
Then on the centre subspace~\(\EE_c^N\) of~\tL\ the evolution of the autonomous~\eqref{eq:odesna} is governed by the \pde
\begin{equation}
\D t\tU=\sum_{|\nv|=0}^N A_{\nv}\partial_\xiv^\nv\tU\,,
\label{eq:ecnddt}
\end{equation}
for matrices~\(A_\nv\) constructed by~\eqref{eq:toeplam}.
\end{proposition}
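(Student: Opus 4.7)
The plan is to start from the parametrization $\tu=\sum_{|\nv|=0}^N\tV^\nv U_\nv$ of the centre subspace~$\EE_c^N$ and push it through the autonomous equation~\eqref{eq:odesna}. Substituting into $\partial_t\tu=\tL\tu$ gives $\sum_{\nv}\tV^\nv\dot U_\nv=\sum_{\nv}(\tL\tV^\nv)\,U_\nv$, and Lemma~\ref{lem:curlyV}'s identity~\eqref{eq:tlvva} lets me replace $\tL\tV^\nv$ by $\sum_{\kv=\ov}^{\nv}\tV^{\nv-\kv}A_\kv$. After reindexing the double sum with $\mv:=\nv-\kv$ so that $\mv$ is the outer index, I collect
\begin{equation*}
\sum_{|\mv|=0}^N\tV^\mv\bigl(\dot U_\mv-\!\!\sum_{|\mv+\kv|\leq N}A_\kv U_{\mv+\kv}\bigr)=0,
\end{equation*}
and invoke the linear independence of the columns of $\tV$ (established in the final paragraph of the proof of Lemma~\ref{lem:curlyV}) to deduce the component ODEs
\begin{equation*}
\dot U_\mv=\sum_{|\mv+\kv|\leq N}A_\kv\, U_{\mv+\kv}\,,\qquad 0\leq|\mv|\leq N\,.
\end{equation*}

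Next I would translate these ODEs into the asserted \pde~\eqref{eq:ecnddt} by forming the generating multinomial $\tU=\sum_{|\nv|=0}^N\xivt{\nv}U_\nv$ exactly as was done in going from the local \des~\eqref{eq:uodes} to the generating-function \pde~\eqref{eq:updeng} in \autoref{sec:gfhed}. Differentiating termwise gives the analogue of~\eqref{eq:dtudxi}, namely $\partial_{\xiv}^{\kv}\tU=\sum_{|\mv+\kv|\leq N}\xivt{\mv}U_{\mv+\kv}$, and consequently
\begin{equation*}
\sum_{|\kv|=0}^{N}A_{\kv}\partial_{\xiv}^{\kv}\tU
=\sum_{|\mv|=0}^N\xivt{\mv}\sum_{|\mv+\kv|\leq N}A_\kv U_{\mv+\kv}
=\sum_{|\mv|=0}^N\xivt{\mv}\dot U_\mv=\D t\tU,
\end{equation*}
which is precisely~\eqref{eq:ecnddt}.

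The argument is essentially bookkeeping: the two nontrivial ingredients are already in place, namely the upper-triangular action of $\tL$ on the multinomial basis $\{\tV^\nv\}$ (so no terms of higher $|\nv|$ sneak in) and the linear independence of that basis in $\UU_N$. The main obstacle, such as it is, lies in carefully interchanging the $\nv$ and $\kv$ summations and keeping the truncation constraint $|\mv+\kv|\leq N$ consistent on both sides; this is the same index juggling that appeared in the derivation of~\eqref{eq:dttu} and can be handled identically. Everything else — the strong-continuity of the semigroup on $\EE_c^N$, and the identification of the diagonal blocks of $\bA$ with $A_\ov$ — has already been secured by Assumption~\ref{ass:spec} and Lemma~\ref{lem:curlyV}, so no further spectral or analytic work is required at this step.
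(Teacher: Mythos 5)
Your proposal is correct and follows essentially the same route as the paper's own proof: substitute the centre-subspace parametrisation into $\partial_t\tilde u=\tilde{\mathcal L}\tilde u$, apply the identity~\eqref{eq:tlvva} for $\tilde{\mathcal L}\tilde{\mathcal V}^\nv$, reindex, invoke linear independence of the columns of $\tilde{\mathcal V}$ to read off the component ODEs, and then repackage those ODEs via the generating multinomial and the $\partial_\xiv^\kv$-shift identity. The paper runs the generating-multinomial step in the opposite direction (starting from $\partial_t\tilde U$ and unwinding to $\sum A_\ellv\partial_\xiv^\ellv\tilde U$), but that is a cosmetic difference, and your use of $|\mv+\kv|\leq N$ is equivalent to the paper's $|\kv|\leq N-|\mv|$ since the multi-indices are componentwise nonnegative.
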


\begin{proof} 
Consider the autonomous~\eqref{eq:odesna}, \(\D t\tu=\tL\tu\): by the parametrisation its left-hand side \(\D t\tu=\sum_{|\nv|=0}^N\tV^\nv \D t{U_\nv}\)\,; whereas the right-hand side
\begin{eqnarray*}
\tL\tu&=&\tL\sum_{|\nv|=0}^N\tV^\nv U_\nv =\sum_{|\kv|=0}^N\tL\tV^\kv U_\kv
=\sum_{|\kv|=0}^N\sum_{\nv=\ov}^\kv \tV^{\kv-\nv} A_{\nv} U_\kv
\quad(\text{by \eqref{eq:tlvva}})
\\&=&\sum_{|\kv|=0}^N\sum_{\nv=\ov}^\kv \tV^{\nv} A_{\kv-\nv} U_\kv
=\sum_{|\nv|=0}^N\tV^{\nv} \sum_{\kv\geq\nv,|\kv|\leq N}  A_{\kv-\nv} U_\kv
\\&=&\sum_{|\nv|=0}^N\tV^\nv\sum_{|\ellv|=0}^{N-|\nv|} A_{\ellv} U_{\ellv+\nv}\,.
\end{eqnarray*}
By the linear independence of the columns of~\tV, these two sides are equal iff
\begin{equation}
\D t{U_\nv}=\sum_{|\ellv|=0}^{N-|\nv|} A_{\ellv}U_{\ellv+\nv}
\quad\text{for all }|\nv|\leq N\,.
\label{eq:utsumau}
\end{equation}
Second, consider the time derivative of the generating multinomial~\tU:
\begin{eqnarray*}
\D t\tU&=&\D t{}\sum_{|\nv|=0}^N \xivt{\nv}U_\nv
=\sum_{|\nv|=0}^N \xivt{\nv}\D t{U_\nv}
\\&=&\sum_{|\nv|=0}^N \xivt{\nv}\sum_{|\ellv|=0}^{N-|\nv|} A_{\ellv}U_{\ellv+\nv}
=\sum_{|\nv|=0}^N \sum_{|\ellv|=0}^{N-|\nv|}  A_{\ellv}\xivt{\nv}U_{\ellv+\nv}
\\&=&
\sum_{|\nv|=0}^N \sum_{|\ellv|=0}^{N-|\nv|}  A_{\ellv}\partial_\xiv^\ellv\xivt{(\nv+\ellv)}U_{\ellv+\nv}
=\sum_{|\ellv|=0}^N A_{\ellv}\partial_\xiv^\ellv\sum_{|\nv|=0}^{N-|\ellv|}  \xivt{(\nv+\ellv)}U_{\ellv+\nv}
\\&&
=\sum_{|\ellv|=0}^N A_{\ellv}\partial_\xiv^\ellv\sum_{|\kv|=0}^{N}  \xivt{\kv}U_{\kv}
=\sum_{|\ellv|=0}^N A_{\ellv}\partial_\xiv^\ellv\tU
\,.
\end{eqnarray*}
Thus the \pde~\eqref{eq:ecnddt} holds on the autonomous centre subspace of~\tL\ in~\(\UU_N\).
\end{proof}

This result establishes that locally, and in the absence of coupling with nearby locales, a macroscale field together with its slowly-varying derivatives exist that evolve consistently with the expected macroscale \pde.

As an example we return to the random walker of \autoref{sec:mdhx} and show how to construct all $\tV^{\nv}$ for $|\nv|\leq\tN=6$.
We use initial basis vectors $V^{\ov}=v_1^{00}=(1,0,0)$ (the eigenvector of $\fL_{\ov}$ with zero eigenvalue) and \(Z^{\ov}=(1,0,0)\) (the  eigenvector of adjoint $\fL_{\ov}^{\dag}$ with zero eigenvalue).
Either, we first recursively calculate all $A_{\nv}$ and basis vectors $V^{\nv}$ from~\eqref{eqs:toep} and use definition~\eqref{eq:genevec} to construct all  $\tV^{\nv}$; or, we calculate $A_{\nv}$ and $\tV^{\nv}$ directly with \begin{subequations}\label{eqs:toepG}%
\begin{eqnarray}&&
 A_{\nv}:=
 \sum_{0<|\kv|, \kv\leq \nv}\big\langle Z^{\ov},\fL_{\kv} \tV^{\nv-\kv}\big\rangle_{\xiv=\ov}\,,
\\&&
\fL_{\ov}\tV^{\nv}-\tV^{\nv} A_{\ov}=
-\sum_{0<|\kv|, \kv\leq \nv}\fL_{\kv}\tV^{\nv-\kv}
+ \sum_{0<|\kv|, \kv\leq \nv}\tV^{\nv-\kv} A_{\kv}\,,
\\&&\big\langle Z^{\ov},\tV^{\nv}\big\rangle=\xivt{\nv}\,,
\end{eqnarray}%
\end{subequations}
obtained from combining \eqref{eqs:toep}~and~\eqref{eq:genevec}.
In either case, $A_{00},A_{01},A_{11}=0$\,, $A_{10}=-\tfrac13$\,, $A_{20}=\tfrac{8}{27}$ and $A_{02}=\tfrac23$ (in agreement with matrix $\bA$ in~\eqref{eq:A}) and
\begin{align*}
\tV^{00}&=V^{00}=(1,0,0)\,,\\
\tV^{10}&=V^{10}+V^{00}\xi_1=(\xi_1,0,-\tfrac29)\,,\\
\tV^{01}&=V^{01}+V^{00}\xi_2=(\xi_2,-1,0)\,,\\
\tV^{20}&=V^{20}+V^{10}\xi_1+V^{00}\tfrac12\xi_1^2=(\tfrac12\xi_1^2,0,-\tfrac{4}{81}-\tfrac29\xi_1)\,,\\
\tV^{11}&=V^{11}+V^{10}\xi_1+V^{01}\xi_2+V^{00}\xi_1\xi_2=(\xi_1\xi_2,\tfrac89-\xi_1,-\tfrac29\xi_2)\,,\\
\tV^{02}&=V^{02}+V^{01}\xi_2+V^{00}\tfrac12\xi_2^2=(\tfrac12\xi_2^2,-\xi_2,\tfrac19)\,.
\end{align*}
These three dimensional generating basis vectors of order $N=2$ multinomials are equivalent to the six $3\tN=18$~dimensional eigenvectors $\vv^{mn}$ in equation~\eqref{eqs:bss6}.
Straightforward substitution confirms that $A_{\nv}$ and basis vectors~$\tV^{\nv}$  satisfy identity~\eqref{eq:tlvva}.

\subsection{Project the uncertain coupling}
\label{sec:hruf}

Our aim is not to model the autonomous~\eqref{eq:odesna}, but the exact system~\eqref{eq:updeng} with its uncertain coupling~\(\tr[u]\) to neighbouring locales.
Let's proceed to project the uncertain coupling by treating it as an \emph{arbitrary}, time dependent, forcing.
The result of this subsection completes the proof of Proposition~\ref{thm:svpde}.

\paragraph{Change basis to centre and stable variables}
Write \(\tu=\tV \cU+\tW \cS\) where the centre variables~\(\cU=\begin{bmatrix} U_\nv \end{bmatrix}\) parametrise the centre subspace, and the stable variables~\(\cS\) parametrise the stable subspace.
Here, $\tW$ is a differential matrix operator containing eigenvectors that form a basis for the multinomial stable subspace~$\EE^N_s$ and is analogous to~$\tV$, the differential operator containing eigenvectors forming a basis fo the multinomial centre subspace~$\EE^N_c$ (Lemma~\ref{lem:curlyV}).
As detailed in Assumption~\ref{ass:spec}, the eigenvectors of the stable subspace have eigenvalues~$\lambda_j$ where $\Re\lambda_j\leq-\beta<-N\alpha$\,, indicating rapid exponential decay of these modes and the emergence of the centre subspace over long time scales (with its eigenvalues $|\Re\lambda_j|\leq \alpha$). 
Despite the rapid decay of the stable modes, when forced by coupling with neighbouring locales, their influence on the dynamics of the system is not negligible in general.
Here we derive a slow macroscale model which quantifies the effects of the coupling.

Analogous to~$\tV$,  $\tW$ is associated with the following properties:
\begin{itemize}
\item \tW\ forms a basis for the multinomial stable subspace~\(\EE_s^N\) of~\tL;
and
\item there exists an operator~\(\cB:\EE_s^N\to\EE_s^N\) such that \(\tL\tW=\tW\cB\) and all eigenvalues of~\cB\ have real part\({}\leq-\beta\) (the eigenvalues of~$\cB$ are $\lambda_{m+1},\lambda_{m+2},\ldots$).

\end{itemize}
We separate the `forcing' in system~\eqref{eq:updeng} into components in~\(\EE_c^N\) and~\(\EE_s^N\): \(\tr=\tV\tr_c+\tW\tr_s\)\,.
Then by the linear independence of the complete basis~\{\tV,\tW\}, the `forced' system~\eqref{eq:updeng} separates to
\begin{subequations}\label{eqs:ecde}%
\begin{eqnarray}&&
\D t{\cU}=\bA \cU+\tr_c\,,
\label{eq:ecdec}
\\&&
\D t{\cS}=\cB \cS+\tr_s
\,.
\label{eq:ecdeq}
\end{eqnarray}%
\end{subequations}

Now consider the stable variables~$\cS$.
Since \tL\ generates a continuous semigroup (Assumption~\ref{ass:spec}),
so does its restriction~\cB, and so we rewrite~\eqref{eq:ecdeq} in the integral equation form
\begin{equation}
\cS(t)
=e^{\cB t}\cS(0)+\int_0^t e^{\cB (t-\tau)}\tr_s(\tau)\,d\tau
=e^{\cB t}\cS(0)+e^{\cB t}\star \tr_s,
\label{eq:qfntime}
\end{equation}
as convolutions \(f(t)\star g(t)=\int_0^tf(t-\tau)g(\tau)\,d\tau\)\,.  Since all eigenvalues of~\cB\ have real part\({}\leq-\beta\), then for some decay rate~\(\gamma\in(\alpha,\beta)\)
\begin{equation}
\cS(t)=e^{\cB t}\star \tr_s+\Ord{e^{-\gamma t}},
\quad\text{written}\quad
\cS(t)\simeq e^{\cB t}\star \tr_s
\label{eq:rforcedq}
\end{equation}
upon invoking the following definition. 
\begin{definition} \label{def:simeq}
Define \(f(t)\simeq g(t)\) to mean \(f-g=\Ord{e^{-\gamma t}}\) as \(t\to\infty\) for some exponential decay rate~\(\gamma\) such that \(\alpha<\gamma <\beta\)\,.
\end{definition}  
Consequently, equation~\eqref{eq:rforcedq} quantifies how the local stable variables~\(\cS\) are forced by  coupling with neighbouring locales via the remainder effects in~\(\tr_s\).

\paragraph{The centre subspace dynamics with remainder}
As invoked in Proposition~\ref{thm:svpde}, define the macroscale amplitude field of slowly varying solutions by the projection
\begin{equation}
U(\xv,t):=\langle Z^\ov,u(\xv,y,t)\rangle,
\label{eq:cssf}
\end{equation}
which as yet is distinct from the multinomial local centre variables~\(\cU\).
In order to discover how the amplitude field~\(U(\xv,t)\) evolves, our task is to now relate the field~\(U(\xv,t)\) to the local centre subspace variables~\(\cU\).

As a preliminary step, and since~$Z^\ov$ is independent of~$\xiv$, for any index~\ellv\ simplify
\begin{eqnarray}
\big\langle Z^\ov,\partial_\xiv^\ellv\tV\big\rangle_{\xiv=\ov}
&=&\left\langle Z^\ov,\begin{bmatrix} \partial_\xiv^\ellv\tV^\nv \end{bmatrix}_{\xiv=\ov}\right\rangle
\quad(\text{by Lemma~\ref{lem:curlyV}, with matrix index~\nv})
\nonumber\\&=&\left\langle Z^\ov,\begin{bmatrix} V^{\nv-\ellv}\text{ for }\nv\geq\ellv,\text{ else }0 \end{bmatrix}\right\rangle
\quad(\text{by \eqref{eq:genevec}})
\nonumber\\&=&\begin{bmatrix} \langle Z^\ov,V^{\nv-\ellv}\rangle\text{ for }\nv\geq\ellv,\text{ else }0_m \end{bmatrix}
\quad(\text{by Defn~\ref{def:baseig}})
\nonumber\\&=&\begin{bmatrix} I_m\text{ for }\nv=\ellv,\text{ else }0_m \end{bmatrix}
\quad(\text{by Defn~\ref{def:baseig} and~\eqref{eq:toeport}}).
\label{eq:zdlv}
\end{eqnarray}
Consequently, with \(\ellv=\ov\), \(\langle Z^\ov,\tV\rangle_{\xiv=\ov}\bA\) is the first row of blocks of~\bA, namely the \(m\times m\) blocks~\(A_\nv\) in appropriate order.
Recalling that \(\cU=\begin{bmatrix} U_\nv \end{bmatrix}\), an identity to be used shortly is then that
\begin{equation}
\langle Z^\ov,\tV\rangle_{\xiv=\ov}\bA\cU
=\sum_{|\nv|=0}^NA_\nv U_\nv\,.
\label{eq:zvau}
\end{equation}

Recall that the multinomial~$\tu(\Xv,t)$ is the Taylor expansion in~\xiv\ of~$u(\Xv+\xiv,y,t)$  to differences~$\Ord{|\xiv|^{N+1}}$.
Hence the field $u(\Xv,y,t)=\tu(\Xv,t)\big|_{\xiv=\ov}$ and so macroscale amplitude $U(\Xv,t)=\langle Z^\ov,\tu(\Xv,t)\big|_{\xiv=\ov}\rangle$.
Take the temporal derivative of~$U(\xv,t)$ at cross-section $\xv=\Xv$\,, 
\begin{align}
\D t{U(\Xv,t)}={}&{}\left\langle Z^\ov,\D t{\tu(\Xv,t)}\right\rangle_{\xiv=\ov}
\nonumber\\
={}&{}\left\langle Z^\ov,\tV\D t{\cU}\right\rangle_{\xiv=\ov}+\left\langle Z^\ov,\tW\D t{\cS}\right\rangle_{\xiv=\ov}
\nonumber\\
&(\text{using separated system \eqref{eqs:ecde}})
\nonumber\\
={}&{}\langle Z^\ov,\tV\bA\cU\rangle_{\xiv=\ov}+\langle Z^\ov,\tV \tr_c\rangle_{\xiv=\ov}
+\langle Z^\ov,\tW\cB\cS\rangle_{\xiv=\ov}+\langle Z^\ov,\tW \tr_s \rangle_{\xiv=\ov}
\nonumber\\
&(\text{using convolution solution \eqref{eq:rforcedq}})
\nonumber\\
={}&{}\langle Z^\ov,\tV\rangle_{\xiv=\ov}\bA\cU +\langle Z^\ov,\tV \tr_c\rangle_{\xiv=\ov}
+\langle Z^\ov,\tW\cB e^{\cB t}\star \tr_s\rangle_{\xiv=\ov}
\nonumber\\{}&{}
+\langle Z^\ov,\tW \tr_s \rangle_{\xiv=\ov}+\Ord{e^{-\gamma t}}
\nonumber\\
&(\text{using identity~\eqref{eq:zvau} and Defn~\ref{def:simeq}})
\nonumber\\
\simeq{}&{}\sum_{|\nv|=0}^NA_\nv U_\nv\ +\langle Z^\ov,\tV \tr_c\rangle_{\xiv=\ov}
+\langle Z^\ov,\tW\cB e^{\cB t}\star \tr_s\rangle_{\xiv=\ov}
\nonumber\\{}&{}
+\langle Z^\ov,\tW \tr_s \rangle_{\xiv=\ov} .
\label{eq:dcdt}
\end{align}

For this result to form a \pde\ for the macroscale field~\(U\) we need to write the centre subspace parameters~\(U_\nv\) in terms of spatial derivatives of~\(U\).
Identity~\eqref{eq:dtudxi} ensures $\partial_{\xiv}^{\kv}\tu\big|_{\xiv=0}=u^{(\kv)}=\partial_{\xv}^{\kv}u\big|_{\xv=\Xv}$ for $|\kv|\leq N$\,. 
Then the $\ellv$th~spatial derivative of $U(\xv,t)$ at $\xv=\Xv$ is 
\begin{align}
\partial_{\xv}^{\ellv}U\big|_{\xv=\Xv}
={}&{}\partial_{\xv}^{\ellv}\langle Z^\ov,u\rangle\big|_{\xv=\Xv}
=\langle Z^\ov,\partial_{\xv}^{\ellv} u|_{\xv=\Xv}\rangle=
\langle Z^\ov,\partial_{\xiv}^{\ellv} \tu \rangle_{\xiv=\ov}
\nonumber\\&\quad(\text{using }\tu=\tV\cU+\tW\cS)
\nonumber\\={}&{}
\langle Z^\ov,\partial_{\xiv}^{\ellv} \tV\rangle_{\xiv=\ov}\cU +\langle Z^\ov,\partial_{\xiv}^{\ellv} \tW\cS \rangle_{\xiv=\ov}
\nonumber\\&\quad(\text{using \eqref{eq:zdlv} and the solution~\eqref{eq:rforcedq}})
\nonumber\\={}&{}
\begin{bmatrix} I_m\text{ for }\nv=\ellv,\text{ else }0_m \end{bmatrix}\cU
+\langle Z^\ov,\partial_{\xiv}^{\ellv} \tW e^{\cB t}\star \tr_s \rangle_{\xiv=\ov}+\Ord{e^{-\gamma t}}
\nonumber\\\simeq{}&{}
U_\ellv+\langle Z^\ov,\partial_{\xiv}^{\ellv} \tW e^{\cB t}\star \tr_s \rangle_{\xiv=\ov} .
\label{eq:dc}
\end{align}
The above shows that, discounting exponential transients, $U_\ellv$~is the $\ellv$th~spatial derivative of the amplitude field~$U(\xv,t)$ evaluated at $\xv=\Xv$\,, with a remainder term determined from the forcing coupling.

Substituting equation~\eqref{eq:dc} into~\eqref{eq:dcdt} gives
\begin{equation}
\left.\D t{U}\right|_{\xv=\Xv}\simeq\sum_{|\nv|=0}^{N}
\left.A_{\nv}\partial_{\xv}^{\nv}U\right|_{\xv=\Xv}+\rho\,,
\label{eq:cpde}
\end{equation}
where the remainder 
\begin{align}
\rho={}&{}\langle Z^\ov,\tV \tr_c\rangle_{\xiv=\ov}+\langle Z^\ov,\tW\cB e^{\cB t}\star \tr_s\rangle_{\xiv=\ov}+\langle Z^\ov,\tW \tr_s \rangle_{\xiv=\ov}
\nonumber\\{}&{}
-\sum_{|\nv|=0}^{N}A_{\nv}\langle Z^\ov,\partial_{\xiv}^{\nv} \tW e^{\cB t}\star \tr_s \rangle_{\xiv=\ov}\,.\label{eq:pderemain}
\end{align}

The \pde~\eqref{eq:cpde} applies for every station~\(\Xv\) in the domain~\(\XX\).
Strictly, the `\pde'~\eqref{eq:cpde} is a coupled differential-integral equation: the dynamics at each station~\(\Xv\) being coupled by the gradients and their history convolution integrals occurring within the remainder~\eqref{eq:pderemain}.
But when the uncertain remainder term is negligible, as in slowly varying regimes where the remainder~\(\rho\) is~\Ord{\sum_{|\nv|=N+1}|\partial_\xv^\nv u|}, then equation~\eqref{eq:cpde} reduces to the plate \pde\ closure~\eqref{eq:dcdta}.  
This completes the argument that establishes Proposition~\ref{thm:svpde}.

\section{Application: homogenisation of multiscale diffusion in 2D}
\label{sec:ehmd2d}
\def\ur{{\mathrm u}} \def\xr{{\mathrm x}} \Vec{xr}

Many engineering structures have microscale structure, such as the windings in electrical machinery \cite[e.g.]{Romanazzi2016}, 
electromagnetism in micro-structured material \cite[e.g.]{Craster2015, Niyonzima2014}, and
slow Stokes flow through porous media \cite[e.g.]{Brown2011}.
The engineering challenge is to understand the dynamics on a scale significantly larger than the micro-structure.
Homogenization, via multiple length scales, is the common approach \cite[e.g.]{Gustafsson03, Engquist08}.
Building on the 1D case \cite[\S2.5]{Roberts2013a}, this section shows a new approach to modelling the large scale dynamics within general rigorous theory.
This new approach provides a route to systematic refinements of the homogenization, and to novel quantification of the remainder error.

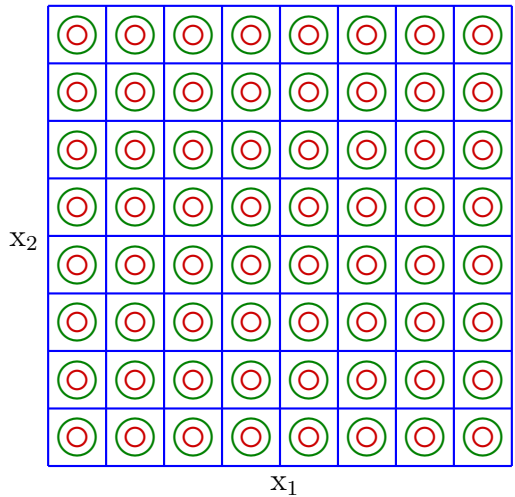
\begin{figure}
\centering
\setlength{\unitlength}{0.7ex}
\begin{picture}(48,48)
\put(23,-2.5){$\xr_1$}
\put(-4,23){$\xr_2$}
\thicklines
\color{blue}
\multiput(0,0)(6,0)9{\line(0,1){48}}
\multiput(0,0)(0,6)9{\line(1,0){48}}
\color{green!50!black}
\multiput(3,3)(6,0)8{\multiput(0,0)(0,6)8{\circle4}}
\color{red!80!black}
\multiput(3,3)(6,0)8{\multiput(0,0)(0,6)8{\circle2}}
\end{picture}
\caption{microscale periodic cells of size \(h\times h\) in 2D are represented by a spatially varying, doubly \(h\)-periodic, diffusion coefficient~\(K(\xrv)\) in the \pde~\eqref{eq:svdpde0} over some large domain.}
\label{fig:cells}
\end{figure}

In two spatial dimensions the prototypical problem is the effective diffusion across structured material with a periodic, cellular, microscale (\autoref{fig:cells}). 
Let \(\xrv=(\xr_1,\xr_2)\) be the spatial coordinates, then we seek to model on the macroscale---that is, across many cells---the diffusion in time~\(t\) of the temperature field~\(\ur(\xrv,t)\) satisfying the diffusion \pde\
\begin{equation}
\D t\ur=\D{\xr_1}{}\left[K(\xrv)\D{\xr_1}\ur\right]
+\D{\xr_2}{}\left[K(\xrv)\D{\xr_2}\ur\right].
\label{eq:svdpde0}
\end{equation}
The spatially varying diffusion coefficient~\(K(\xrv)\) is here assumed to be doubly periodic over a length~\(h\) as illustrated schematically by \autoref{fig:cells}; that is, \(K(\xr_1+ph,\xr_2+qh)=K(\xr_1,\xr_2)\) for integer~\(p,q\).
Here we use upright roman letters for field~\(\ur\) and space~\xrv\ to distinguish these direct physical quantities from those of the mathematical analysis which use the maths font~\(u\) and~\xv\ for closely related but different quantities.

Ensemble averages provides our route to rigorous modelling.
Let's embed the specific problem in the family of problems of all phase shifted versions of the material.
That is, for all microscale phase shifts~\phiv, \(0\leq\phi_1,\phi_2<h\)\,, seek the field \(\ur(\xrv,\phiv,t)\) that satisfies the \pde
\begin{equation}
\D t\ur=\D{\xr_1}{}\left[K(\xrv+\phiv)\D{\xr_1}\ur\right]
+\D{\xr_2}{}\left[K(\xrv+\phiv)\D{\xr_2}\ur\right].
\label{eq:svdpde}
\end{equation}
The original \pde~\eqref{eq:svdpde0} and its solution is included in this family as the phase shift \(\phiv=\ov\) version of \pde~\eqref{eq:svdpde} and its solution.
The second step in the embedding is to write the solution field~\(\ur\) in terms of a new field~\(u(\xv,\yv,t)\) that is a function of macroscale coordinates \(\xv\in\XX\), microscale `cell' coordinates \(\yv\in\YY=[0,h]^2\), and time~\(t\).
Hereafter let's consider the diffusivity~\(K\) to be only a function of the microscale cell coordinate~\yv, that is,~\(K(\yv)\).
Then consider solutions~\(u(\xv,\yv,t)\) to the \pde
\begin{eqnarray}
\partial_tu&=&\partial_{y_1}\big[K(\yv)\partial_{y_1}\big]u
+\partial_{y_2}\big[K(\yv)\partial_{y_2}\big]u
\nonumber\\&&{}
+\big[K_{y_1}+2K\partial_{y_1}\big]u_{x_1}
+\big[K_{y_2}+2K\partial_{y_2}\big]u_{x_2}
\nonumber\\&&{}
+K(\yv)u_{x_1x_1}+K(\yv)u_{x_2x_2}\,.
\label{eq:emdpde}
\end{eqnarray}
Elementary algebra shows that solutions of this \pde~\eqref{eq:emdpde}, via \(\ur(\xr,\phiv,t)=u(\xr,\xr+\phiv,t)\), give solutions to the \pde~\eqref{eq:svdpde}---and hence~\eqref{eq:svdpde0}---and vice-versa.

\begin{figure}
\centering
\includegraphics{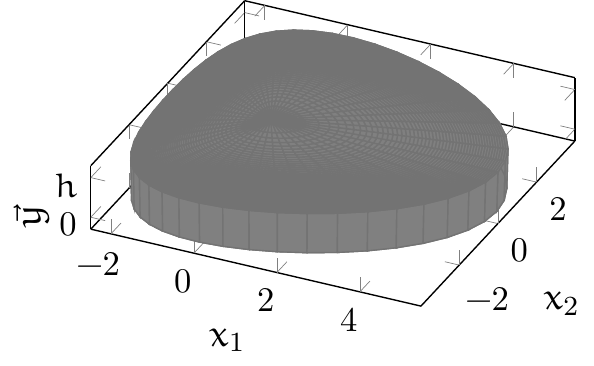}\hfil 
\includegraphics{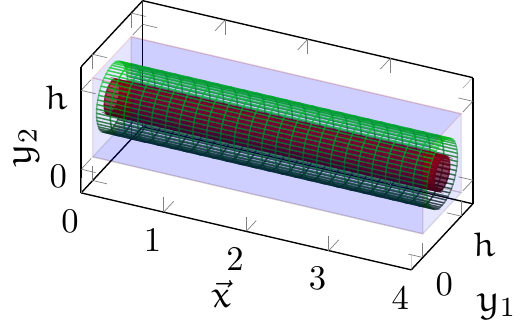}
%
%
%
%
\caption{schematic diagrams both illustrating that at every macroscale position~\(\xv\in\XX\), the \pde~\eqref{eq:emdpde} has a small square cross-section \(\YY=[0,h]^2\) of one cell.  Solutions of \pde~\eqref{eq:svdpde} are the values of field~\(u\) on any 2D slice through these pictures of \(\yv=(\xv+\phiv)\) mod\(~h\).}
\label{fig:cellxsec}
\end{figure}

We need to consider boundary conditions for both domains~\XX\ and~\YY.
First, the slowly varying theory of \autoref{sec:pmid} applies usefully in a macroscale domain~\XX\ from which boundary layers and shocks have been excised, and addresses the general solutions in such a domain~\XX.
Thus macroscale boundary conditions on~\(\partial\XX\) are not needed to apply the theory.
Second, on the microscale~\YY, the diffusion coefficient is \(h\)-periodic in~\yv, so we correspondingly require the field~\(u(\xv,\yv,t)\) to be \(h\)-periodic in~\yv.
Then identities such as \(\yv=\xrv+\phiv\) are implicitly to be interpreted modulo~\(h\) in both components.
This periodicity in the cell structure is sufficient to form a well-posed problem in the cross-sectional, cell, microstructure.

The embedding \pde~\eqref{eq:emdpde} is in the linear class~\eqref{eq:upde} of \autoref{sec:pmid} with \(M=2\) large space dimensions and operators
\begin{eqnarray*}
\fL_\ov&=&\partial_{y_1}\big[K(\yv)\partial_{y_1}\big]
+\partial_{y_2}\big[K(\yv)\partial_{y_2}\big],
\\\fL_{(1,0)}&=&\big[K_{y_1}(\yv)+2K(\yv)\partial_{y_1}\big]
\\\fL_{(0,1)}&=&\big[K_{y_2}(\yv)+2K(\yv)\partial_{y_2}\big]
\\\fL_{(2,0)}&=&\fL_{(0,2)}=K(\yv),
\end{eqnarray*}
and all other~\(\fL_\kv\) zero.
Other crucial requisites are those of Assumption~\ref{ass:spec} upon the properties of~\(\fL_\ov\) with boundary conditions of double \(h\)-periodicity in \(\yv\in\YY\); that is, upon the basic properties of the fundamental cell problem.
The self-adjoint cell eigen-problem
\begin{equation*}
\fL_\ov v=\partial_{y_1}\big[K(\yv)v_{y_1}\big]
+\partial_{y_2}\big[K(\yv)v_{y_2}\big]=\lambda v
\end{equation*}
is well known and for diffusion \(K(\yv)\geq K_{\min}>0\) has \(m=1\) zero eigenvalue corresponding to eigenfunction \(v_0(\yv)={}\)constant over the cell~\YY, and the other eigenvalues are real and negative, \(\lambda_j\leq -\beta\) for \(\beta=4\pi^2K_{\min}/h^2\).
The corresponding set of eigenfunctions are orthogonal and complete in the Hilbert space of smooth~\(L^2\) functions on~\YY. 
Consequently the requisite semigroups exist \cite[Ch.~6, e.g.]{Carr81}.
Hence Proposition~\ref{thm:svpde} applies.
Set \(Z^\ov(\yv)=1/h^2\) so that the macroscale field \(U(\xv,t):=\frac1{h^2}\iint_\YY u(\xv,\yv,t)\,d\yv\) is a cell-mean.
Then, for any chosen order of truncation~\(N\), the field~\(U\) satisfies the \pde
\begin{equation}
\D tU=\sum_{|\nv|=1}^NA_\nv\Dn\xv\nv U\,,\quad \xv\in\XX\,,
\label{eq:smdpde}
\end{equation}
for some \(2\times2\) matrices~\(A_\nv\) depending upon~\(K(\yv)\), to a closure remainder error quantified by~\eqref{eq:pderemain}, and upon neglecting transients decaying roughly like~\Ord{e^{-\beta t}} as \(t\to\infty\)\,.

The \pde~\eqref{eq:smdpde} is the homogenisation of the original \pde~\eqref{eq:svdpde0}, generalised to any order of truncation.
Further, the novel expression~\eqref{eq:pderemain} would be used to quantify the remainder error in any large scale modelling.
\footnote{One subtlety in the result is that in any given physical realisation, the ensemble of initial conditions has to be chosen so that the remainder error becomes small.  
If an ensemble of initial conditions were chosen poorly, then the remainder error would stay large over space-time.}

\paragraph{Vibrations of an inhomogeneous plate}
If, instead of the inhomogeneous diffusion~\eqref{eq:svdpde0}, suppose we wanted to model the vibrations of an inhomogeneous plate satisfying the corresponding, second order in time, \pde
\begin{equation}
\DD t\ur=\D{\xr_1}{}\left[K(\xrv)\D{\xr_1}\ur\right]
+\D{\xr_2}{}\left[K(\xrv)\D{\xr_2}\ur\right].
\label{eq:svdpdev0}
\end{equation}
Then all of the above algebra would be effectively the same except for two aspects.
First, the exponential emergence of the model from all initial conditions, expressed in terms~``\Ord{e^{-\gamma t}}'', would be replaced by long-lived oscillations~\(e^{i\omega_j t}\) of relatively high frequency.
Second, also the exponentially decaying convolutions in the remainder term~\(r\) would turn into tricky convolutions with oscillating factors~\(e^{i\omega_j t}\), potentially causing an algebraic growth in the error of the model.
Both of these mechanisms would be ameliorated by any small viscous damping or small radiative damping that is physically present but omitted from the mathematical \pde~\eqref{eq:svdpdev0}.
Consequently, in practice and with care the approach here should help model the vibration of inhomogeneous plates.

\section{Conclusion}

This article develops a general theoretical approach to supporting the much invoked practical approximation of slow variations in space.
The key idea, suggested by \cite{Roberts2013a}, is to examine the dynamics in the locale around any cross-section.
We find that a Taylor series approximation to the dynamics is only coupled to neighbouring locales via the highest order resolved derivative. 
Treating this coupling as an `uncertain forcing' of the local dynamics we in essence apply non-autonomous centre manifold theory to prove the existence and emergence of a local model.
This support applies for all cross-sections and so establishes existence and emergence globally in the domain.

In this theory there is no requirement for some small parameter to tend to zero. 
A centre manifold model exists for solutions up to at least some finite amplitude and up to at least some finite spatial gradients of the variables.
Thus the approach remains valid when the domain of the original system is finite in all dimensions.

This article focussed on the case of a centre manifold amongst centre-stable dynamics as this case is the most broadly useful in modelling dynamics.
\autoref{sec:mdhx} considered one such example with slow-stable dynamics and derived the slowly varying model on the slow manifold.  
The key required properties are the persistence of centre manifolds under perturbations by time dependent `forcing'.
Since this property of persistence is shared by other invariant manifolds, we expect the same approach will support the existence and perhaps relevance of other invariant manifolds with slow variations in space.
The persistence of centre manifold should also support the extension of the theory to stochastic systems~\cite[e.g.]{Arnold98, Roberts06k}, and to systems with nonlinear dynamics, as shown by \cite{Roberts2013a} for the one dimensional case $\XX\subset \RR$. 

This approach opens much for future research.
It may be able to illuminate the thorny issue of providing boundary conditions to slowly varying models of problems such as shells, plates and Turing patterns \cite[e.g]{Segel69, Roberts92c, Mielke91a}.

\paragraph{Acknowledgement} The Australian Research Council Discovery Project grant DP120104260 and DP150102385 helped support this research.
We thank Arthur Norman and colleagues who maintain the Reduce software.

{\raggedright\printbibliography}

\appendix

\section{Notation}
\label{sec:nota}
This appendix summarises a lot of the symbols used in the general theory.  The third column in the table of notation gives the case corresponding to the random walker\slash heat exchanger of \autoref{sec:mdhx}.

\begin{longtable}{@{}lp{0.55\linewidth}p{0.21\linewidth}@{}}
\hline
Symbol&Meaning&Random walker
\\\hline\endhead
\(\xv\in\XX\)& `large' open spatial domain in~\(\RR^M\) of the plate
&\((x,y)\in\RR^2\)
\\\(y\in\YY\)& space of the `cross-section' of the plate
&\(\YY=\{0,1,2\}\)
\\\(\UU\)&Hilbert space of field values&\(\RR^3\)
\\\(u(\xv,y,t)\)& function of field values, in~\(C^{2N}\)
&\((u_0,u_1,u_2)\)
\\\(u^{(\nv)}(\Xv,y,t)\)
&for \(|\nv|<N\)\,, the \nv{}th derivative in~\xv\ evaluated at \(\xv=\Xv\)
&
\\\(u^{(\nv)}(\Xv,\xv,y,t)\)
&for \(|\nv|=N\)\,, weighted integral, between~\Xv\ and~\xv, of the \nv{}th derivative in~\xv
&
\\\(N\)&some chosen order of truncation of the multivariable Taylor series
&\(N=2\)
\\\(\fL_\kv\)& maps \(\UU\to\UU\), operator coefficients of the \kv{}th~derivative in~\xv\ in the original \pde
&\(3\times3\) matrices
\\\(\NN_0\)
&\(\{0,1,2,\ldots\}\), the non-negative integers
\\\kv, \nv, \ldots&\(M\)-dimensional integer multi-indices in \(\NN_0^M\), \(|\kv|\leq N\)
&\(\kv=(k_1,k_2),\ldots\)
\\\(U(\xv,t)\)&macroscale emergent variables, in~\(\RR^m\);
in general \(U:=\langle Z^\ov,u(\xv,y,t)\rangle\)
&\(U_0(x,y,t)\)
\\\(A_\nv\)&\(m\times m\) matrix coefficient of the \nv{}th spatial derivative of~\(U\) in the macroscale \pde
& scalars \(0\), \(-\frac13\), \(\frac8{27}\), \(\frac23\)
\\\(\sum_{|\kv|=a}^b\)
&denotes a sum over all indices \(\kv\in\NN_0^M\) satisfying \(a\leq|\kv|\leq b\).
\\\(\sum_{|\kv|=a}^\infty\)
&denotes a sum over all indices \(\kv\in\NN_0^M\) satisfying \(|\kv|\geq a\)\,, but the sum truncates as there are a finite number of non-zero~\(\fL_\kv\).
\\\(\sum_{\text{condition}}\)
&denotes a sum over all variable indices that satisfy the specified condition.
\\\(\sum_{\kv=\av}^{\bv}\)
&denotes a sum over indices~{\kv} such that \(\av\leq\kv\leq\bv\)\,, that is, \(a_\ell\leq k_\ell\leq b_\ell\) for all \(\ell=1,2,\ldots,M\)\,.
\\\(r,r_\nv\)&complicated remainder terms for various expressions, functionals of the field~\(u\)
\\\tN& \({}=\binom{N+M}M\) is the number of \des, each in~\UU, of the dynamics for any locale in~\XX
&\(\tN=6\)
\\\(\xiv\in\RR^M\)&generating multinomial variable---is effectively a local space variable, \(\xv=\Xv+\xiv\)
\\\(\GG_N\)& space of multinomials in~\xiv\ of degree\({}\leq N\)
\\\(\UU_N\)& \({}=\UU\otimes\GG_N\) the space of multinomial fields
\\\tu(\Xv,t)& the generating multinomial with coefficients~\(u^{(\nv)}\), depends implicitly on~\xiv\ and~\(y\)
\\\(\tr\)&generating multinomial of remainder terms, a complicated functional of field~\(u\)
\\\cG&operator to give the generating multinomial, at~\Xv, of any given field
\\\tL& \(\sum_{|\kv|=0}^{N}\fL_{\kv}\partial_{\xiv}^{\kv}\) differential operator on multinomials, corresponds to given \pde
\\\(\EE_c^0,\EE_s^0\)
&subspaces of~\UU, invariant under~\(\fL_\ov\), centre and stable respectvely
\\\(\lambda_j,v_j^\ov\)
&complete eigenvalues and (generalised) eigenvectors (in~\UU) of~\(\fL_\ov\)
&eigenvalues \(0,-1,-3\)
\\\(V^\ov\in\UU^{1\times m}\)
&\({}=\begin{bmatrix} v_1^\ov&v_2^\ov&\cdots&v_m^\ov \end{bmatrix}\), \(m\) columns are a basis for centre subspace~\(\EE_c^0\)
&\((1,0,0)\)
\\\(Z^\ov\in\UU^{1\times m}\)
&\(m\)~columns are a basis for the centre subspace of the adjoint~\(\Tr{\fL_\ov}\), also \(\langle Z^\ov,V^\ov\rangle=I_m\)
&\((1,0,0)\)
\\\(\EE_c^N\subset\UU_N\)
&multinomial centre subspace of~\tL
\\\(V^\nv\in\UU^{1\times m}\)
&each are components of centre eigenvectors of~\tL, derived recursively 
\\\(\tV^\nv\in\UU_N^{1\times m}\)
&\(m\)~columns are linearly independent centre eigenvectors of~\tL,  
\\\(\tV\in\UU_N^{1\times m\tN}\)
&\({}=[\tV^\nv]\), its \(m\tN\) columns are a basis for~\(\EE_c^N\)
\\\bA
&matrix of \(\tN\times\tN\) blocks of \(A_\nv\) and~\(0_m\), as appropriate.
&the \(6\times6\) matrix in~\eqref{eq:A}
\\\(U_\nv\in\RR^m\)
&each are \(m\)~parameters of the multinomial centre subspace~\(\EE_c^N\), \(m\tN\)~in total 
&\(U_{(m,n)}=U_0^{mn}\)
\\\(\tU\in\GG_N^m\)
&corresponding generating multinomial of the centre subspace parameters
\\\(\cU\in\RR^{m\tN}\)
&\({}=\begin{bmatrix} U_\nv \end{bmatrix}\), all parameters of the multinomial centre subspace of~\tL
&\(\cU=\big[U_0^{mn}\big]\)
\\\(\EE_s^N\subset \UU_N\)
&multinomial stable subspace of~\tL
\\\(\tW\)
&a basis for the multinomial stable subspace~\(\EE_s^N\) of~\tL\ (formal)
\\\(\cS\)
&parameters of the multinomial stable subspace of~\tL
&\(\cS=\big[U_j^{mn}\big]\), \(j=1,2\)
\\\(\cB\)
&restriction of multinomial operator~\tL\ to \(\EE_s^N\) 
\\\(\tr_c,\tr_s\)
&components of the remainder term~\tr\ in the subspaces \(\EE_c^N,\EE_s^N\) respectively
\\\hline
\end{longtable}

\clearpage

\section{Computer algebra models the random walker}
\label{sec:camhe}


This section lists and describes computer algebra code to analyse the Taylor series approach to the slowly varying modelling of the heat exchanger~\eqref{eqs:hedim} of Figure~\ref{fig:heat2d}.
The code is written in the free computer algebra package Reduce\footnote{\url{http://www.reduce-algebra.com/} gives full information about Reduce.} \cite[e.g.]{MacCallum91}.
Analogous code will work for other computer algebra packages.

Make the printing appears nicer.
\begin{reduce}
on div; on revpri; off allfac; 
\end{reduce}

\subsection{Transform PDEs to ODEs for Taylor coefficients}

Define the $x$ and $y$ components of the velocities in each plane.
The matrices $\texttt{vx}$ and $\texttt{vy}$ are diagonal with $\vec{v}_j=(\texttt{vx}_{jj},\texttt{vy}_{jj})$\,.
Also define the mixing operator \texttt{lop} which describes the rate of the walker changing among the three directions.
\begin{reduce}
nn:=2;
vx:=mat((1,0,0),(0,-1,0),(0,0,+1));
vy:=mat((1,0,0),(0, 0,0),(0,0,-1));
lop:=mat((-1,1,0),(1,-2,1),(0,1,-1))$ 
\end{reduce}

Define the model in terms of the heat flows $c_j(x,y,t)$.
\begin{reduce}
array resc(3);
operator ct;
depend ct,t,x,y;
for j:=1:3 do resc(j):=-df(ct(j),t)+(for k:=1:3 sum (
    lop(j,k)*ct(k)-vx(j,k)*df(ct(k),x)-vy(j,k)*df(ct(k),y) ))$
\end{reduce}
Map from the $c$~fields to the slow and fast $u$~heat fields~u\ defined by~\eqref{eq:udefs}
The $u_0(x,y,t)$~field is slow with eigenvalue $\lambda_0=0$\,, whereas fields $u_1(x,y,t)$ and~$u_2(x,y,t)$ are fast with eigenvalues $\lambda_1=-1$ and $\lambda_2=-3$\,, respectively. 
\begin{reduce}
array resu(2);
operator ut;  
depend ut,t,x,y;
ct(1):=(ut(0)+ut(1)+ut(2))$
ct(2):=(ut(0)-2*ut(2))$
ct(3):=(ut(0)-ut(1)+ut(2))$
write resu(0):=(resc(1)+resc(2)+resc(3))/3;
write resu(1):=(resc(1)-resc(3))/2;
write resu(2):=(resc(1)-2*resc(2)+resc(3))/6;
array evl(2);
evl(0):=0$ evl(1):=-1$ evl(2):=-3$
\end{reduce}

Extract coefficient matrices of these modal \ode{}s for later use.
\begin{reduce}
matrix ll00(3,3),ll10(3,3),ll01(3,3);
for i:=0:2 do ll00(i+1,i+1):=evl(i);
for i:=0:2 do for j:=0:2 do ll10(i+1,j+1):=df(resu(i),df(ut(j),x));
for i:=0:2 do for j:=0:2 do ll01(i+1,j+1):=df(resu(i),df(ut(j),y));
\end{reduce}

Construct the Taylor expansion of the $u$~fields to order~$N$ using Lagrange's remainder theorem with Taylor coefficients~$u_{jmn}$ where $j=0,1,2$, and $m,n=0,1,\ldots,N$ with $|(m,n)|\leq N$\,.
Each coefficient~$u_{jmn}$ is a function of~$X$, $Y$~and~$t$, but those with $|(m,n)|=N$ are also functions of $x$~and~$y$.
\begin{reduce}
operator u; depend u,xx,yy,t; 
for j:=0:2 do for k:=0:nn do depend u(j,k,nn-k), x,y$
for j:=0:2 do ut(j):=(for m:=0:nn sum for n:=0:nn-m sum 
    u(j,m,n)*(x-xx)^m/factorial(m)*(y-yy)^n/factorial(n))$
\end{reduce}

Obtain the \textsc{ode} for each Taylor coefficient.
\begin{reduce}
array odeu(2,nn,nn);
for j:=0:2 do for m:=0:nn do for n:=0:nn-m do 
    write odeu(j,m,n):=sub(y=yy,x=xx,df(df(resu(j),x,m),y,n))$
\end{reduce}

The uncertain `forcing' terms are $\partial_x u_{j}^{(m,n)}$ and $\partial_y u_{j}^{(m,n)}$ with order $m+n=N$\,.
Rename these $w(j,m,n,z)$ where the subscript refers to the derivative~$\partial_z$ with $z=x,y$\,. 
(Because \verb|sub()| is still active, we have to replace xx by x and yy by y.)
\begin{reduce}
inclw:=0; 
operator w; depend w,tt;
for j:=0:2 do for m:=0:nn do for n:=0:nn-m do  
  write odeu(j,m,n):=((odeu(j,m,n) 
    where df(u(~k,~l,~p),~z)=>w(k,l,p,z)*inclw when z neq t)
    where {xx=>x, yy=>y});
depend tt,t;
\end{reduce}

\subsection{Initialise the construction of a transform}

We want to determine a new set of fields~$U_{jmn}$ for which the evolution~$\dot{U}_{jmn}$ separates fast and slow fields. 
Store the transform for \(u_j^{(m,n)}\) in array \verb|ux(j,m,n)|, and the  right-hand side of the corresponding \ode\ for \(U_j^{(m,n)}\) in array \verb|duudt(j,m,n)|
\begin{reduce}
operator uu; depend uu,xx,yy,t;
let { df(uu(~p,~q,~r),t)=>duudt(p,q,r)
       , u(~p,~q,~r)=>ux(p,q,r) };
array ux(2,nn,nn), duudt(2,nn,nn);
\end{reduce}

As a first approximation the transform $u\mapsto U$ is the identity and $\dot{U}_{jmn}=\lambda_jU_{jmn}$\,.
\begin{reduce}
for m:=0:nn do for n:=0:nn-m do for j:=0:2 do begin 
    ux(j,m,n):=uu(j,m,n);
    duudt(j,m,n):=evl(j)*uu(j,m,n);
end;
\end{reduce}

Need to express the uncertain remainders as history integrals so use well established operators \cite[e.g.]{Roberts06k, Roberts06j}.
\begin{reduce}
operator z; linear z;
let { df(z(~f,tt,~mu),t)=>-sign(mu)*f+mu*z(f,tt,mu)
    , z(1,tt,~mu)=>1/abs(mu)
    , z(z(~r,tt,~nu),tt,~mu) =>
      (z(r,tt,mu)+z(r,tt,nu))/abs(mu-nu) when (mu*nu<0)
    , z(z(~r,tt,~nu),tt,~mu) =>
      -sign(mu)*(z(r,tt,mu)-z(r,tt,nu))/(mu-nu)
      when (mu*nu>0)and(mu neq nu)
    };
\end{reduce}

Define an operator to separate out terms in fast stable variables.
\begin{reduce}
operator fast; linear fast;
let { fast(uu(0,~m,~n),t)=>0 
    , fast(uu(~j,~m,~n),t)=>uu(j,m,n) when j>0 
    , fast(w(~a,~b,~c,~d),t)=>0
    , fast(z(~a,tt,~b),t)=>0 };
\end{reduce}

The above properties are \emph{critical}: they must be correct for the results to be correct.

Determine the effect on the slow manifold of the fast forcing.
\begin{reduce}
operator slow; linear slow;
let { slow(uu(~j,~m,~n),uu)=>uu(j,m,n)/evl(j) };
\end{reduce}

\subsection{Iterate to separate slow-fast coordinates}

Iterate to obtain the transform $u\mapsto U$ and the evolution $\dot{U}_{jmn}$\,.
The evolution of the slow fields $U_{0mn}$ should only contain slow fields, whereas the evolution of the fast fields, $U_{1mn}$~and~$U_{2mn}$, should only contain fast fields.
For $j=1,2$\,, all fast fields in the residue of the \textsc{ode} of fast field $u_{jmn}$ are placed in the evolution $\dot{U}_{jmn}$ and all remaining terms are placed in the transform of $u_{jmn}$\,.
All fast fields in the residue of the \textsc{ode} of slow field $u_{0mn}$ are placed in the transform of $u_{0mn}$ and all remaining terms are placed in the evolution~$\dot{U}_{0mn}$.
The coupling terms~$w$ may have both slow and fast components but are designated slow.
\begin{reduce}
for iter:=1:9 do begin
  ok:=1; lengthRess:={};
  for m:=0:nn do for n:=0:nn-m do begin
    res:=odeu(0,m,n); 
    lengthRess:=length(res).lengthRess;
    ux(0,m,n):=ux(0,m,n)+slow(gd:=fast(res,t),uu);
    duudt(0,m,n):=duudt(0,m,n)+(res-gd);
    if res neq 0 then ok:=0;

    for j:=1:2 do begin
    res:=odeu(j,m,n); 
    lengthRess:=length(res).lengthRess;
    duudt(j,m,n):=duudt(j,m,n)+(gd:=fast(res,t));
    ux(j,m,n):=ux(j,m,n)+z(res-gd,tt,evl(j));
    if res neq 0 then ok:=0;
    end;
  end;
  write lengthRess:=lengthRess;
  showtime;
  if ok then write iter:=iter+10000;
end;
\end{reduce}
Check the iteration converged to the specified order.
\begin{reduce}
if not ok then rederr("The iteration failed to converge");
\end{reduce}

\subsection{Write out the transform}
On completing the iteration, write all transforms and evolutions.
\begin{reduce}
for j:=0:2 do for n:=0:nn do for m:=0:n do 
    write ux(j,n-m,m):=ux(j,n-m,m);
for j:=0:2 do for n:=0:nn do for m:=0:n do 
    write duudt(j,n-m,m):=duudt(j,n-m,m);
\end{reduce}

\subsection{Check the generating multinomial form}

First find the slow subspace eigenvectors in the multinomial form: they come from the various coefficients of each of the \verb|uu(0,m,n)| amplitudes.
\begin{reduce}
array vt(nn,nn);
for n:=0:nn do for m:=0:n do begin
  vt(n-m,m):=tp mat((0,0,0));
  for p:=0:nn do for q:=0:nn-p do vt(n-m,m):=vt(n-m,m)+df(
    tp mat((ux(0,p,q),ux(1,p,q),ux(2,p,q)))
    ,uu(0,n-m,m))*xi^p*yi^q/factorial(p)/factorial(q);
end;
\end{reduce}
Find that \(\D{\xi_1}{\vv^{(m,n)}}=\vv^{(m-1,n)}\) and \(\D{\xi_2}{\vv^{(m,n)}}=\vv^{(m,n-1)}\).  This pattern must be useful.

Then put these eigen-multinomials into one whole
(works for any basis vectors at all).
The second version below uses \verb|dx| and \verb|dy| to notionally symbolise differential operators in some manner yet to be decided.
\begin{reduce}
p:=-1$ factor zz;
vtt:=for n:=0:nn sum for m:=0:n sum vt(n-m,m)*zz^(p:=p+1);
vtt:=for n:=0:nn sum for m:=0:n sum vt(n-m,m)*dx^(n-m)*dy^m;
\end{reduce}

Now pre-multiply by~\tL\ obtained from the basic modal \ode{}s:
\begin{reduce}
lltvtt:=ll00*vtt+ll10*df(vtt,xi)+ll01*df(vtt,yi);
\end{reduce}

Is this the same as the following?
\begin{reduce}
matrix avtt(3,1);
for i:=1:3 do avtt(i,1):=
  (duudt(0,0,0) where uu(0,~m,~n)=>df(vtt(i,1),xi,m,yi,n));
errorInOps:=avtt-lltvtt;
\end{reduce}
Yes it is!
So, post-multiplying by the eigen-matrix is equivalent to premultiplying by the slow evolution operator.
And this happens automatically.

Evaluating \verb|avtt| or \verb|lttvtt| at \(\xi=\zeta=0\) then gives  in its first row the required differential operator of the slow evolution, and something as yet undecided in the 2nd and 3rd row---must be something to do with the generalised eigenvectors of the slow modes.

\subsection{Write transform in LaTeX}
Write out in pretty LaTeX.
The definition of the \LaTeX\ command is a bit dodgy as convolutions of convolutions are not printed in the correct order; however,  convolutions commute so it does not matter.
\begin{reduce}
load_package rlfi;
mathstyle math;
defindex ux(down,up,up);
defindex duudt(down,up,up);
defindex uu(down,up,up);
defindex w(down,up,up,down);
defid uu,name="U";
defid ux,name="u";
defid duudt,name="\dot U";
defid tt,name="}{";
defid w,name="u";
defid z,name="\z";
\end{reduce}

Change \verb|name| to get braces, not left-right parentheses.
\begin{reduce}
deflist('((!( !{) (!) !}) ),'name)$
\end{reduce}
Force all fractions (coded in Reduce as \verb|quotient|) to use \verb|\frac| command so we can change how it appears.
\begin{reduce}
put('quotient,'laprifn,'prinfrac);
\end{reduce}

Write expressions to the file \verb|glmsmvs.red| for later reading.
Prepend the expressions with an instruction to write a heading, and surround the heading with anti-math mode to cancel the math environment that rlfi puts in.
\begin{reduce}
out "glmsmvs.red"$
for j:=0:2 do for m:=0:nn do for n:=0:m do 
    write "ux(",j,",",m-n,",",n,"):=ux(",j,",",m-n,",",n,");";
for j:=0:2 do for m:=0:nn do for n:=0:m do 
    write "duudt(",j,",",m-n,",",n,"):=duudt(",j,",",m-n,",",n,");";
write "end;";
shut "glmsmvs.red";
\end{reduce}

Now write the LaTeX:
\begin{reduce}
out "glmsmvs.ltx"$
on latex;
write "
\newcommand{\z}[2]{e^{\ifnum#2=-1 -\else#2\fi t}{\star}#1}
";
in "glmsmvs.red"$
off latex;
shut "glmsmvs.ltx"$
\end{reduce}

\begin{reduce}
end;end;end;
\end{reduce}

\subsection{Transform back to slowly varying}

We now treat the slow fields~$U_{0mn}$ as the Taylor coefficients of some slow field function $U(x,y,t)$ and construct a slow field \textsc{pde}.
\begin{reduce}
operator uufun; depend uufun,x,y,t;
\end{reduce}

All field coefficients~$U_{0mn}$ are functions of $X$, $Y$ and~$t$, but those field coefficients with $|(m,n)|=N$ are also functions of $x$~and~$y$.
\begin{reduce}
for j:=0:2 do for k:=0:nn do depend uu(j,k,nn-k),x,y$
\end{reduce}

Define the Taylor series expansion of $U(x,y,t)$ with coefficients $U^c_{0mn}=U_{0mn}$\,.
\begin{reduce}
operator uut; depend uut,x,y,t;
operator uuc; depend uuc,t;
for j:=0:2 do uut(j):=(for m:=0:nn sum for n:=0:(nn-m) sum 
    uuc(j,m,n)*(x-xx)^m/factorial(m)*(y-yy)^n/factorial(n));
\end{reduce}

Construct a \textsc{pde} for $U(x,y,t)$ which, like the original $u(x,y,t)$~\textsc{pde}, is first order in time.
For clarity, we place $w_x(u_{jmn})$~and~$w_y(u_{jmn})$ for $j=1,2$ and all convolutions in the function `force'.
\begin{reduce}
resuu:=(-df(uufun(0),t)+(df(uut(0),t)) 
    where {uuc(~k,~m,~n)=>uu(k,m,n)})$
forced:={z0(~f,t)=>0, z1(~f,t)=>0, z2(~f,t)=>0,
    wx(1,~k,nn-k)=>0, wy(1,~k,nn-k)=>0, wx(2,~k,nn-k)=>0,
    wy(2,~k,nn-k)=>0}$
force:=resuu-(resuu where forced)$
resuu:=resuu-force$
for m:=0:nn do resuu:=sub(
  {wy(0,m,nn-m)=df(ux(0,m,nn-m),y)
  ,wx(0,m,nn-m)=df(ux(0,m,nn-m),x)}
  ,resuu);    
\end{reduce}

Replace all coefficient fields~$U_{0mn}$ with derivatives of~$U(x,y,t)$ and its Taylor series:
\begin{equation}
U_{0mn}=\partial_x^{m}\partial_y^nU(x,y,t)-[\partial_x^{m}\partial_y^nU_{\text{Taylor}}(x,y,t)-U^c_{0mn}]\,.
\end{equation}
\begin{reduce}
for l:=0:nn do for k:=0:l do begin
  resuu:=(resuu where {uu(0,l-k,k)=>
  df(uufun(0),x,l-k,y,k)-(df(uut(0),x,l-k,y,k)-uuc(0,l-k,k))})$
  resuu:=(resuu where {uuc(0,~m,~n)=>uu(0,m,n)});
end;
\end{reduce}

In the \textsc{pde} of~$U(x,y,t)$ the forcing terms are defined by the function `force'.
\begin{reduce}
factor df;  
resuu:=resuu+forcing;  
\end{reduce}

End the if-statement that chooses whether to execute the code of this appendix.
\begin{reduce}end;\end{reduce}

\end{document}